\theoremstyle{plain}
\newtheorem{theorem}{Theorem}[section]
\newtheorem{lemma}{Lemma}[section]
\newtheorem{prop}{Proposition}[section]
\newtheorem{cor}{Corollary}[section]
\newtheorem{conjecture}{Conjecture}
\theoremstyle{definition}
\newtheorem{defin}{Definition}[section]
\newtheorem{Assumption}{Assumption}
\newcommand{\bggo}{\mathcal O}
\newcommand{\mf}[1]{\displaystyle{\mathfrak{#1}}}
\DeclareMathAlphabet{\mathscr}{OT1}{pzc}{m}{it}
\DeclareMathOperator{\spec}{\ensuremath{Spec}}
\DeclareMathOperator{\Gr}{\ensuremath{gr}}
\DeclareMathOperator{\ad}{\ensuremath{ad}}
\DeclareMathOperator{\Sym}{\ensuremath{Sym}}
\begin{document}

\title{An analogue of the Kac-Weisfeiler conjecture }
\author{Akaki Tikaradze}
\address{The University of Toledo, Department of Mathematics, Toledo, Ohio, USA}
\email{\tt tikar06@gmail.com}
\begin{abstract} 
In this paper we discuss an analogue of the Kac-Weisfeiler conjecture for a certain
class of almost commutative algebras. In particular, we prove the Kac-Weisfeiler
type statement for rational Cherednik algebras.

\end{abstract}
\maketitle
\section{introduction}

Throughout $\bold{k}=\bar{\bold{k}}$ we be an algebraically closed field of characteristic
$p>2.$ Let $A$ be an affine $\bold{k}$-algebra which if finite over its center $\bold{Z}(A).$
In this setting
one is interested in studying simple modules, in particular their dimensions.
By Schur's lemma all simple $A$-modules are finite dimensional and they are
paramentrized by the corresponding characters of $\bold{Z}(A):$ 
we have a surjective map with finite fibers $\lbrace Irr\rbrace\to \spec \bold{Z}(A),$ from the set of
 isomorphism classes of  simple $A$-modules to the set of characters of $\chi\in\bold{Z}(A).$
For each  character $\chi \in \spec \bold{Z}(A),$ we will denote by $A_{\chi}$
the algebra $A\otimes_{\bold{Z}(A)}\bold{k},$ where $\bold{k}$ is
viewed as a $\bold{Z}(A)$ module via $\chi$. We would like to
study the largest power of $p$ that divides dimensions
of all simple $A_{\chi}$-modules. Let us denote this number by $i(\chi).$
We would like to relate function $i:\spec{\bold{Z}(A)}\to \mathbb{Z}_{+}$ to geometry
of $\spec{\bold{Z}(A)}.$
More specifically, let $\cup S_i=\spec \bold{Z}(A)$ be the smooth
stratification of $\spec\bold{Z}(A)$, 
and for $\chi\in \spec \bold{Z}(A)$ denote by $s(\chi)$ the dimension of the 
smooth stratum containing $\chi.$  

Motivated by the Kac-Weisfeiler conjecture \cite{KW}, which is now a theorem of Premet\cite{P},
it is tempting to state the following.

\begin{conjecture}\label{conjecture}
Supposed that $A$ is a nonnegatively filtered $\bold{k}$-algebra, such that
$\Gr A$ is a finitely generated commutative domain over $\bold{k}.$ Assume that
$(\Gr A)^p\subset \Gr \bold{Z}(A)$ and that $\spec \Gr A$ is a union of
finitely many symplectic leaves and is a Cohen-Macaulay variety, then for any central character
$\chi\in \spec \bold{Z}(A),$ we have $i(\chi)\geq \frac{1}{2}s(\chi).$

\end{conjecture}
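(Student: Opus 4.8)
\medskip
\noindent\textbf{Towards a proof.}
The plan is to localise the problem at a point of the symplectic leaf picked out by $\chi$, where $A$ should split --- up to a matrix factor --- as the quantisation of a transverse slice; the matrix factor then supplies the divisibility. First I would introduce the $p$-centre: choosing central elements of $\bold{Z}(A)$ whose symbols are $p$-th powers of a generating set of $\Gr A$, one obtains a graded central subalgebra $Z_0\subseteq\bold{Z}(A)$ with $\spec Z_0$ equal to the Frobenius twist $X^{(1)}$ of $X:=\spec\Gr A$, over which $A$, hence also $\bold{Z}(A)$, is module-finite. Fix $\chi$, set $\chi_0:=\chi|_{Z_0}\in X^{(1)}$, and let $x\in X$ be the point with $x^{(1)}=\chi_0$. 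Writing $A_{\chi_0}:=A\otimes_{Z_0}\bold{k}_{\chi_0}$ and letting $\widehat A_x$ be the completion of $A$ along $\mathfrak m_{\chi_0}A$, a ring module-finite over the complete local ring $\widehat{\mathcal O}_{X^{(1)},\chi_0}$, one has (since $\mathfrak m_{\chi_0}\mathcal O_{X,x}$ and $\mathfrak m_x$ have the same radical) that $\widehat A_x$ is a filtered quantisation of $\widehat{\mathcal O}_{X,x}$. By Nakayama every simple $\widehat A_x$-module is annihilated by $\mathfrak m_{\chi_0}$, so the simple $\widehat A_x$-modules are exactly the simple $A_{\chi_0}$-modules, and the simple $A_\chi$-modules are exactly those among them with central character $\chi$. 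It therefore suffices to prove that $p^{s(\chi)/2}$ divides $\dim_{\bold{k}}M$ for every simple $\widehat A_x$-module $M$.

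Next I would exploit the Poisson geometry. The almost-commutativity makes $X$ a Poisson variety, Cohen--Macaulay and, by hypothesis, a finite union of symplectic leaves; these leaves form a finite stratification by smooth, locally closed, even-dimensional symplectic subvarieties, which --- as happens for complex symplectic singularities by Kaledin's theorem --- one expects to match with the smooth strata of $X$, and, via the finite maps $\spec\bold{Z}(A)\to X^{(1)}\cong X$, with those of $\spec\bold{Z}(A)$, so that $s(\chi)=\dim\ell$ for the symplectic leaf $\ell\ni x$; write $2d:=\dim\ell$. Formally at $x$, Weinstein's splitting theorem provides a Poisson isomorphism $\widehat{\mathcal O}_{X,x}\cong\widehat{\mathcal O}_{\ell,x}\mathbin{\widehat{\otimes}}\widehat{\mathcal O}_{S,o}$, with $\widehat{\mathcal O}_{\ell,x}\cong\bold{k}[[u_1,\dots,u_{2d}]]$ a formal symplectic polydisc and $S$ a transverse Poisson slice whose bracket vanishes at its base point $o$. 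I would lift this to an isomorphism of completed filtered quantisations $\widehat A_x\cong\widehat{\mathcal W_d}\mathbin{\widehat{\otimes}}\widehat{A^{S}}$, where $\mathcal W_d$ is the $d$-th Weyl algebra (the canonical quantisation of the polydisc) and $A^{S}$ is an almost-commutative quantisation of $S$ with $p$-centre $\mathcal O(S^{(1)})$. The reduction of $\mathcal W_d$ at the origin of its $p$-centre $\bold{k}[\xi_1^p,\dots,\xi_d^p,\eta_1^p,\dots,\eta_d^p]$ is the matrix algebra $M_{p^d}(\bold{k})$, so reducing the previous isomorphism modulo $\mathfrak m_{\chi_0}$ gives $A_{\chi_0}\cong M_{p^d}(\bold{k})\otimes C$ for the finite-dimensional algebra $C:=A^{S}\otimes_{\mathcal O(S^{(1)})}\bold{k}_o$. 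Hence every simple $A_{\chi_0}$-module, and a fortiori every simple $A_\chi$-module, has $\bold{k}$-dimension divisible by $p^d=p^{s(\chi)/2}$; that is, $i(\chi)\geq\tfrac12 s(\chi)$.

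The delicate step --- and the one I expect to be the main obstacle --- is upgrading the Poisson Weinstein splitting to the tensor decomposition $\widehat A_x\cong\widehat{\mathcal W_d}\mathbin{\widehat{\otimes}}\widehat{A^{S}}$ of the \emph{noncommutative} quantisation, in \emph{characteristic $p$}. In characteristic zero decompositions of this kind for quantisations along symplectic leaves are available (rigidity of quantisations of a formal symplectic polydisc, together with constructions of slice quantisations for symplectic singularities), but transferring them to positive characteristic --- in particular verifying that the slice quantisation $A^{S}$ is again almost commutative with $p$-centre of the expected size, so that the scheme-theoretic Frobenius fibres, which the Cohen--Macaulay hypothesis is precisely there to keep under control, vary uniformly and an induction on $\dim X$ can be closed --- is where the real work lies. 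For rational Cherednik algebras this can be bypassed by reduction from characteristic zero: the transverse slices to symplectic leaves of a Cherednik algebra are again products of smaller rational Cherednik algebras by Bezrukavnikov--Etingof, their $p$-centres are explicitly understood, and carrying out the above scheme with these concrete models yields the asserted Kac--Weisfeiler-type bound.
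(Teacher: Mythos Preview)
The statement you are attempting to prove is recorded in the paper as a \emph{conjecture}; the paper does not claim a proof in general, only partial evidence (the smooth-locus case, the ``all but finitely many $\chi$'' statement, and the Cherednik case). So you should not expect your outline to close, and indeed it does not.

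You have correctly isolated the main gap yourself: lifting the formal Weinstein splitting $\widehat{\mathcal O}_{X,x}\cong\widehat{\mathcal O}_{\ell,x}\,\widehat\otimes\,\widehat{\mathcal O}_{S,o}$ to a tensor decomposition $\widehat A_x\cong\widehat{\mathcal W_d}\,\widehat\otimes\,\widehat{A^S}$ of the quantisation, in characteristic $p$, is not known. The characteristic-zero arguments (rigidity of quantisations of the formal polydisc, existence of slice quantisations for symplectic singularities) rest on vanishing of certain Hochschild/Poisson cohomology groups and on exponentiation of derivations, both of which are problematic over $\bold{k}$. There is a second, earlier gap you pass over too quickly: the identification $s(\chi)=\dim\ell$. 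You assert that the smooth stratification of $\spec\bold{Z}(A)$ should match, via the finite map to $X^{(1)}$, the symplectic-leaf stratification of $X$; but $\bold{Z}(A)$ is not a priori $(\Gr A)^p$ (only $\Gr\bold{Z}(A)=(\Gr A)^p$ under the hypotheses, cf.\ the paper's Proposition on the smooth locus), and even granting that, a finite bijective map between varieties need not identify singular loci in positive characteristic. Without this, the quantity $s(\chi)$ in the conjecture is not visibly the dimension of any symplectic leaf.

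It is worth noting that the paper's own methods are quite different from your local-model approach. Rather than splitting $\widehat A_x$, the paper works on the graded side: it shows directly (Theorem~\ref{key}) that any finite-dimensional Poisson module over a Poisson order, supported at a point $y$, has dimension divisible by $p^{d(y)}$, via an explicit PBW-type basis argument in the auxiliary algebra $D'(S,I)_y$; it then transports this to $A$ by a Rees-algebra degeneration and a Premet--Skryabin closedness lemma. This bypasses the need for a noncommutative Weinstein splitting entirely, at the cost of only yielding the weaker bounds actually proved in the paper. Your final remark about rational Cherednik algebras, however, is on target and matches one of the paper's two arguments for that case: the Bezrukavnikov--Etingof completion isomorphism is exactly the kind of explicit ``slice'' decomposition you want, and the paper invokes it as an alternative route to Corollary~\ref{CH}.
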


Let us explain how does the above relate to the Kac-Weisfeiler conjecture.

Let $\mf{g}$ be a Lie algebra of a semisimple simply-connected algebraic group $G$
over $\bold{k}.$ Then
given that $p$ is large enough, we have Veldkamp's theorem (\cite{V}) describing
the center of the enveloping algebra of $\mf{g}$:
 $\bold{Z}(\mf{u}\mf{g})=\bold{Z}_p(\mf{g})\otimes_{\bold{Z}_p\cap \bold{Z}_{HC}} \bold{Z}_{HC}$
where $\bold{Z}_p$ is the $p$-center, generated by elements $g^p-g^{[p]}, g\in \mf{g},$
and $\bold{Z}_{HC}=\mf{U}\mf{g}^{G}$ is the Harish-Chandra part of the center.
 A character of $\bold{Z}(\mf{U}\mf{g})$ can be thought of as a pair
 $(\chi, \lambda), \chi\in \mf{g}^{*}, \lambda:\bold{Z}_{HC}\to \bold{k}.$
Put $A=\mf{U}\mf{g}/ker(\lambda).$ Then $A$ inherits the filtration from $\mf{U}\mf{g},$
and $\Gr A=\bold{k}[\mathcal{N}],$ where $\mathcal{N}$ is the nilpotent cone of $\mf{g}^{*}.$
Thus, $\Gr A$ is a Cohen-Macaulay domain, and $\spec \Gr A$ is a union
of finitely many symplectic leaves. Thus, assumptions of the conjecture
are met. We have that $\chi\in \spec \bold{Z}(A).$ Now it
follows that $i(\chi)=\dim G\chi.$ Thus  Conjecture \ref{conjecture} in this
case says that any simple $A$-module affording character $\chi$ has dimension 
divisible by $p^{\frac{1}{2}\dim G\chi},$ which is the statement of the Kac-Weisfeiler conjecture.

As a supporting evidence for the above conjecture, we will show
it for $\chi$ in the smooth locus of $\spec \bold{Z}(A)$ (Proposition \ref{props}).
 Also, we will show that for all but finite $\chi\in \spec \bold{Z}(A)$ any
 irreducible representation affording $\chi$ has dimension divisible by $p$ (Corollary \ref{travis}).

Following the approach of Premet and Skryabin \cite{PS}, we prove
the Kac-Weisfeiler type statement (which is weaker than Conjecture \ref{conjecture})
 for a large class of filtered
algebras which includes rational Cherednik algebras (Corollary \ref{cherednik}).

\section{Codimensions of Poisson ideals}

We start by recalling the definition of algebraic symplectic leaves.

\begin{defin} Let $A$ be a Poisson algebra over $\bold{k}.$ Then
a closed symplectic leaf of $\spec A$ is a closed subvariaty
defined by a prime Poisson ideal $I$ such that the Poisson variety $\spec A/I$ is 
a symplectic variety. A closed symplectic leaf of a Poisson
 variety $X$ is a closed subvariety $Z\subset X$ such that
 for any affine open subset $\spec A\subset X, Z\cap \spec A$ is
 a closed symplectic leaf of $\spec A.$ Finally, an algebraic
 symplectic leaf of a Poisson variety $X$ is a closed
 symplectic leaf of an open subvariety of $X.$

\end{defin}

We will recall the definition of Poisson orders by Brown-Gordon \cite{BGo}

\begin{defin} A Poisson order is a pair of an affine $\bold{k}$-algebra $A$ and its central
subalgebra $\bold{Z}_0,$ such that $A$ is finitely generated module over $\bold{Z}_0,$ 
and $\bold{Z}_0$ is a Poisson algebra, $A$ is a Poisson $\bold{Z}_0$-module such that
for $a\in \bold{Z}_0,$ the Poisson bracket $\lbrace{a, \rbrace}$ is a derivation
of $A.$ In this case $A$ is called Poisson $\bold{Z}_0$-order.
\end{defin}
\begin{defin}
Let $A$ be a Poisson $\bold{Z}_0$-order. A Poisson $A$-module is a left $A$-module
M equipped with  a $\bold{k}$-bilinear map $\lbrace, \rbrace:\bold{Z}_0\otimes M\to M$
such that
$$\lbrace \lbrace a, a'\rbrace, m\rbrace=\lbrace a, \lbrace a', m\rbrace\rbrace-\lbrace a', \lbrace a, m\rbrace\rbrace$$
$$\lbrace a, bm\rbrace=\lbrace a, b\rbrace m+b\lbrace a, m\rbrace$$
for all $a, a'\in \bold{Z}_0, b\in A, m\in M.$
\end{defin}

Let $A$ be an associative $\bold{k}$-algebra, and let $\bold{Z}$ be its
central subalgebra. Assume moreover that $A$ is a finitely generated $\bold{Z}$-module.
Let $L\subset Der_{\bold{Z}}(A)$ be a $\bold{Z}$-submodule which
contains all inner derivations and
is closed under the commutator bracket and taking to the $p$-th power.
Thus $L$ is a restricted Lie subalgebra of  $Der_{\bold{Z}}(A).$ Let
$D_{L}(A)$ denote the following algebra. $D'_{L}(A)$ is generated by $L$
as an algebra over $A$ subject to the following relations:
$$i_la-ai_l=l(a), i_{l_1}i_{l_2}-i_{l_2}i_{l_1}=i_{[l_1, l_2]}, $$
$$(i_{l_1})^p=i_{{l_1}^{p}}, a\in A, l_1, l_2\in L, i_{\ad a}-a=0$$
It is immediate that if $L$ is the set of all inner derivations, then
$D'_{L}(A)=A.$

Given a Poisson $\bold{Z}_0$-order we will define algebra 
$D'_{\bold{Z}_0}(A)$ as follows. Let $L\subset Der_{\bold{k}}(A)$ be
the restricted Lie subalgebra of $Der_{\bold{k}}(A)$ generated
by all $\lbrace a, -\rbrace, a\in\bold{Z}_0$ and all inner derivations.
 Then $D'_{\bold{Z}_0}(A)$ denotes $D'_{L}(A).$

We will be primarily interested in two-sided Poisson ideals
of $A,$ i.e. two sided ideals $I\subset A,$ such that $\lbrace z, a\rbrace\in I$ for
all $z\in\bold{Z}, a\in I.$ Clearly given such an ideal $I$, both $I, A/I$ are
$D'_{\bold{Z}_0}(A)$-modules. The following is very standard

\begin{lemma} Let $A$ be a Poisson $\bold{Z}_0$-order. Then $A$ embeds
in $D'_{\bold{Z}_0}(A)$ and $D'_{\bold{Z}_0}(A)$ is finite over
its central subalgebra $(\bold{Z}_0)^p.$

\end{lemma}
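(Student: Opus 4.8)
The plan is to establish the two assertions in turn, using the presentation of $D'_{\bold{Z}_0}(A)$ by generators and relations. For the embedding $A \hookrightarrow D'_{\bold{Z}_0}(A)$, I would construct a representation of $D'_{\bold{Z}_0}(A)$ in which $A$ acts faithfully. The natural candidate is the action on $A$ itself (or on $\End_{\bold{k}}(A)$, or on a suitable module of differential-operator type): $A$ acts by left multiplication, and each $l \in L \subset Der_{\bold{k}}(A)$ acts as the derivation $l$. One checks that the defining relations $i_l a - a i_l = l(a)$, $i_{l_1}i_{l_2} - i_{l_2}i_{l_1} = i_{[l_1,l_2]}$, $(i_l)^p = i_{l^{[p]}}$ and $i_{\ad a} = a$ are all satisfied by these operators on $A$ — the last one because $\ad a$ acts on $A$ as $[a,-]$, which on the left regular representation is $L_a - R_a$, and the relation $i_{\ad a} = a$ must be read inside $D'$ rather than on the module, so more care is needed: the honest way is to build the faithful module so that left multiplication by $a$ and the operator $i_{\ad a}$ are forced to coincide. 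Concretely, take $M = A$ with $A$ acting by left multiplication and $i_l$ acting by the derivation $l$; since every inner derivation $\ad a$ lies in $L$ and acts as $L_a - R_a$ while the image of $a \in A$ acts as $L_a$, these do \emph{not} agree on $M$, so instead one should use the ``regular'' bimodule-type construction or simply observe that the composite $A \to D'_{\bold{Z}_0}(A) \to \End_{\bold{k}}(A)$ given by $a \mapsto L_a$ is injective and factors through $D'_{\bold{Z}_0}(A)$ once one checks the relations hold with $i_l \mapsto l$ and the convention that in $\End_{\bold{k}}(A)$ we reinterpret $i_{\ad a}$ as $L_a$ (equivalently, present $D'$ with $i_{\ad a}$ eliminated). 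This shows $A \to D'_{\bold{Z}_0}(A)$ is a split injection.

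For the finiteness statement, the key point is that $(\bold{Z}_0)^p$ is central in $D'_{\bold{Z}_0}(A)$ and that $D'_{\bold{Z}_0}(A)$ is generated over it by finitely many elements each of which is ``algebraic'' in the appropriate sense. I would argue as follows. First, $A$ is finite over $\bold{Z}_0$ by hypothesis, and $\bold{Z}_0$ is finite over $(\bold{Z}_0)^p$ because $\bold{Z}_0$ is an affine $\bold{k}$-algebra in characteristic $p$ (so $\bold{Z}_0$ is integral over the subalgebra of $p$-th powers of a polynomial generating set, hence a finite module over $(\bold{Z}_0)^p$). Hence $A$ is a finite $(\bold{Z}_0)^p$-module. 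Next I must show $D'_{\bold{Z}_0}(A)$ is a finite $A$-module; combined with the previous step this gives finiteness over $(\bold{Z}_0)^p$. For this, note $L$ is generated as a restricted Lie algebra by the Hamiltonian derivations $\{z,-\}$ for $z$ ranging over a finite generating set $z_1,\dots,z_n$ of $\bold{Z}_0$, together with inner derivations. In $D'_{\bold{Z}_0}(A)$ the inner derivations are absorbed into $A$ (via $i_{\ad a} = a$), and a PBW-type argument shows $D'_{\bold{Z}_0}(A)$ is spanned over $A$ by ordered monomials in $i_{\{z_1,-\}},\dots,i_{\{z_n,-\}}$; the restricted relation $(i_l)^p = i_{l^{[p]}}$ bounds each exponent by $p-1$ provided each $i_{\{z_j,-\}}^p$ can be rewritten in terms of lower-degree terms and elements of $A$ — which holds because $\{z_j,-\}^{[p]}$ is again a derivation in $L$, hence (being built from the $\{z_i,-\}$ and inner derivations) lies in the $A$-span of the chosen generators modulo $A$ itself. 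Thus $D'_{\bold{Z}_0}(A)$ is a finite $A$-module, and $(\bold{Z}_0)^p$ is central in it.

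The main obstacle I anticipate is the bookkeeping in the PBW/finiteness step: one needs a genuine Poincaré–Birkhoff–Witt basis theorem for the ``restricted differential operator'' algebra $D'_L(A)$ attached to a restricted Lie algebroid $L$ over $A$, so that ordered monomials in a finite set of generators of $L$ — with exponents bounded by $p-1$ thanks to the $p$-power relation — span $D'_L(A)$ over $A$. Establishing (or citing) this PBW property, and in particular checking that the relation $i_{\ad a} = a$ together with the $\bold{Z}_0$-module structure really does let one reduce an arbitrary element of $L$ to the $A$-span of finitely many Hamiltonian generators modulo inner derivations, is the technical heart; once it is in place, the finiteness of $A$ over $(\bold{Z}_0)^p$ is routine and the embedding is the split injection noted above. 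I would also remark that the $p$-power relation is essential: without it the algebra generated by a single derivation over $A$ need not be finite over $A$, so the restricted structure on $L$ is exactly what makes the statement true.
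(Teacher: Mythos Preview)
Your approach is essentially the same as the paper's, but considerably more elaborate. For the embedding, the paper simply observes that $A$ is naturally a $D'_{\bold{Z}_0}(A)$-module, and hence if $J=\ker(A\to D'_{\bold{Z}_0}(A))$ then $JA=0$, so $J=0$; you are circling around exactly this argument, but you spend most of your effort worrying about whether the relation $i_{\ad a}=a$ is compatible with the left-regular action (a concern the paper simply does not raise). For finiteness, the paper's one-line argument --- $(\bold{Z}_0)^p$ is central because derivations kill $p$-th powers, and $Der_{\bold{k}}(A)$ is a finite $(\bold{Z}_0)^p$-module, hence so is $D'_{\bold{Z}_0}(A)$ --- is exactly the PBW-with-bounded-exponents argument you spell out, only compressed; your version is more explicit about why the $p$-power relation gives the bound, which is the real content.

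One comment: your claim that the embedding is a \emph{split} injection is stronger than what is needed or proved --- the module action gives a map $D'_{\bold{Z}_0}(A)\to\End_{\bold{k}}(A)$ whose composite with $A\to D'_{\bold{Z}_0}(A)$ is injective, and that already forces $A\hookrightarrow D'_{\bold{Z}_0}(A)$ without any splitting.
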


\begin{proof}

Let $J$ is the kernel of the map $A\to D'_{\bold{Z}_0}(A).$ Since $A$ is naturally
a $D'_{\bold{Z}_0}(A)$-module, we have that $JA=0,$ so $J=0.$ It is immediate
that $(\bold{Z}_0)^p$ is central in $D'_{\bold{Z}_0}(A),$ and since $Der_{\bold{k}}(A)$
is  a finite $(\bold{Z}_0)^p$-module, we get that $D'_{\bold{Z}_0}(A)$ is finite over
$(\bold{Z}_0)^p.$

\end{proof}

Let $X$ be a variety over $\bold{k}.$ 
Recall the definition of the quasi-coherent sheaf of algebras of the
 crystalline differential operators
$D(X)$. Locally it is generated by  $\bggo{}_X$ and vector fields $TX$ 
subject to the condition
$$[\xi, f]=\xi(f), \xi_1\cdot \xi_2-\xi_2\cdot\xi_1=[\xi_1, \xi_2],$$
where $f, \xi_1$ and $\xi_2$ are local sections of $\bggo{}_X, TX$ respectively.

Given an affine Poisson $\bold{k}$-algebra $S,$ 
We will denote $D'_{S}(S)$ simply by $D'(S).$
 Also, given a Poisson ideal $I\subset S$ we will denote by $D'(S, I)$
  the quotient of
$D'(S)$ by the two sided ideal generated by the image of $I$ in $D'(S).$
Finally, for any closed point $y\in \spec S/I,$ $D'(S, I)_y$ will denote
the quotient of $D'(S, I)$ by the two-sided ideal generated by the 
image of $m_y^p$ (where $m_y$ denotes the maximal ideal corresponding to $y$).

Clearly, $D'(S, I)_y$ is a finite dimensional algebra
over $\bold{k}.$

We will use the following result of Bezrukavnikov-Mirkovic-Rumynin \cite{BMR}

\begin{theorem} [\cite{BMR} Theorem 2.2.3] \label{BMR} Let $X$ be a smooth variety over $\bold{k},$ 
then the ring of crystalline
differential operators $D(X)$ is an Azumaya sheaf of algebras over $T^{*}X^{(1)}$ (the Frobenius
twist of the cotangent bundle of $X$).
\end{theorem}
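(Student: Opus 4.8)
The plan is to exhibit the large central subalgebra of $D(X)$ isomorphic to $\mathcal O_{T^{*}X^{(1)}}$, to show that $D(X)$ is locally free of rank $p^{2\dim X}$ over it, and then to verify the Azumaya condition fibrewise after reducing to affine space. For the first step, for a local section $\xi$ of $TX$ put $\psi(\xi)=\xi^{p}-\xi^{[p]}\in D(X)$, where $\xi^{[p]}$ denotes the $p$-fold composition of $\xi$ as a derivation of $\mathcal O_X$ (again a derivation, by Hochschild) and $\xi^{p}$ the $p$-th power in $D(X)$. Since $\ad$ is compatible with $p$-th powers, $\ad(\xi^{p})=\ad(\xi)^{p}$, and this operator agrees on the local generators $\mathcal O_X$ and $TX$ of $D(X)$ with $\ad(\xi^{[p]})$ (on $TX$ via the restricted structure of vector fields, on $\mathcal O_X$ because $\xi^{p}=\xi^{[p]}$ as derivations); hence $\psi(\xi)$ is central. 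Jacobson's formula in $D(X)$ together with the parallel formula for the restricted operation on $TX$ (the Lie-polynomial corrections being the same) shows $\psi$ is additive, while Hochschild's identity $(f\xi)^{[p]}=f^{p}\xi^{[p]}+(f\xi)^{p-1}(f)\,\xi$ and the corresponding expansion of $(f\xi)^{p}$ give $\psi(f\xi)=f^{p}\psi(\xi)$. Therefore $\psi$ extends to an $\mathcal O_{X^{(1)}}$-algebra homomorphism $\mathcal O_{T^{*}X^{(1)}}\to Z(D(X))$, making $D(X)$ a quasi-coherent sheaf of algebras over $T^{*}X^{(1)}$.

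Next, I would work \'etale-locally: choose an \'etale map $\pi\colon X\to\mathbb A^{n}$ with coordinates $x_1,\dots,x_n$ and dual fields $\partial_1,\dots,\partial_n$, so that $\partial_i^{[p]}=0$. The universal enveloping algebroid commutes with \'etale base change of Lie algebroids, so $D(X)$ is free over $\mathcal O_X$ on the monomials $\partial^{\beta}$; combined with the PBW filtration this shows $D(X)$ is free over the central subalgebra $\mathcal O_{T^{*}X^{(1)}}$ (locally generated by $\mathcal O_{X^{(1)}}$ together with the $\psi(\partial_i)=\partial_i^{p}$) with basis $\{x^{\alpha}\partial^{\beta}:0\le\alpha_i,\beta_i\le p-1\}$, of rank $p^{2n}$. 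As $\pi$ \'etale induces an \'etale morphism $T^{*}X^{(1)}\to T^{*}\mathbb A^{n,(1)}$ over which $D(X)$ is the pullback of $D(\mathbb A^{n})$, and being Azumaya is \'etale-local on the base, it suffices to treat $X=\mathbb A^{n}$, where $D(\mathbb A^{n})$ is the crystalline Weyl algebra $W_n=\mathbf k\langle x_i,\partial_i\mid[\partial_i,x_j]=\delta_{ij},\ [x_i,x_j]=[\partial_i,\partial_j]=0\rangle$ with centre $\mathbf k[x_i^{p},\partial_i^{p}]$.

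To conclude, I would check the Azumaya property on closed fibres. An algebra that is finite locally free of rank $(p^{n})^{2}$ over a finitely generated commutative $\mathbf k$-algebra is Azumaya iff the natural map $D(X)\otimes_{Z}D(X)^{\mathrm{op}}\to\End_{Z}(D(X))$ (with $Z=\mathcal O_{T^{*}X^{(1)}}$) is an isomorphism, and since both sides are locally free of the same rank this holds iff it is an isomorphism on every closed fibre, i.e. iff every closed fibre is a central simple algebra over its residue field; because $\mathbf k=\bar{\mathbf k}$ carries no nontrivial finite-dimensional division algebras, it is enough that each closed fibre be \emph{simple}, and it is then automatically $M_{p^{n}}(\mathbf k)$. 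The fibre of $W_n$ at a closed point $(a,b)\in T^{*}\mathbb A^{n,(1)}$ is $W_{n,a,b}=W_n/(x_i^{p}-a_i,\ \partial_i^{p}-b_i)$, of dimension $p^{2n}$, and since the pairs $(x_i,\partial_i)$ for distinct $i$ commute, $W_{n,a,b}\cong\bigotimes_{i=1}^{n}W_{1,a_i,b_i}$; so it suffices that each $p^{2}$-dimensional $W_{1,a,b}$ be simple. Given a nonzero $u$ in a two-sided ideal $I$, write $u=\sum_{j<p}f_j(x)\partial^{j}$; applying $\ad(x)=[x,-]$ repeatedly strictly decreases the top $\partial$-degree (because $[x,\partial^{j}]=-j\partial^{j-1}$ with $0<j<p$), reducing to a nonzero $g(x)\in I\cap \mathbf k[x]/(x^{p}-a)$; since $\mathbf k[x]/(x^{p}-a)\cong \mathbf k[x]/(x-\alpha)^{p}$ is local, applying $\ad(\partial)=d/dx$ to $g$ the right number of times produces a unit in $I$, forcing $I=W_{1,a,b}$. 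Hence $W_{1,a,b}\cong M_{p}(\mathbf k)$, $W_{n,a,b}\cong M_{p^{n}}(\mathbf k)$, and $D(X)$ is Azumaya over $T^{*}X^{(1)}$.

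The main obstacle, I expect, is the bookkeeping in the first two steps: verifying carefully that $\psi$ is a well-defined Frobenius-semilinear map landing in the centre (this is exactly where the Jacobson and Hochschild $p$-th-power identities are used) and that $D(X)$ is a coherent, locally free module of rank $p^{2\dim X}$ over the resulting central subalgebra, which relies on the restricted enveloping-algebroid PBW theorem. The \'etale reduction to $\mathbb A^{n}$ and the fibrewise simplicity computation for $W_{1,a,b}$ are comparatively routine.
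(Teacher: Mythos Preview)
Your argument is correct and is essentially the standard one (indeed, it is very close to the original argument in \cite{BMR}): construct the $p$-curvature map $\psi(\xi)=\xi^{p}-\xi^{[p]}$, verify via the Jacobson and Hochschild identities that it is additive and Frobenius-semilinear with central image so as to obtain the map $\mathcal O_{T^{*}X^{(1)}}\to Z(D(X))$, use \'etale coordinates and PBW to see local freeness of rank $p^{2\dim X}$, reduce to $\mathbb A^{n}$, and check simplicity of the fibres of the Weyl algebra by the usual $\ad(x)$/$\ad(\partial)$ degree argument.

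Note, however, that there is nothing to compare against in the present paper: Theorem~\ref{BMR} is quoted from \cite{BMR} as an input and is not reproved here. The paper only uses its consequence that for a symplectic affine $B$ the fibre $D'(B)_{y}$ is a matrix algebra of dimension $p^{2\dim B}$. Your write-up would serve perfectly well as a self-contained proof sketch to accompany the citation; the only point I would tighten is the phrase ``the corresponding expansion of $(f\xi)^{p}$'': it is worth stating explicitly the identity $(f\xi)^{p}=f^{p}\xi^{p}+(f\xi)^{p-1}(f)\,\xi$ in $D(X)$ (valid because $\ad(f\xi)$ acts on $f$ and $\xi$ through the Lie-algebroid action, so both sides produce the same Lie-polynomial correction terms), from which $\psi(f\xi)=f^{p}\psi(\xi)$ is immediate.
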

\begin{cor}
Let $B$ is a finitely generated Poisson domain over $\bold{k},$
such that $\spec B$ is a symplectic variety, then $D'(B)_{y}$ is isomorphic
to the matrix algebra of dimension $p^{2\dim B}$ over $\bold{k}.$
\end{cor}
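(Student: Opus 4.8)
The plan is to identify $D'(B)$ with a central reduction of the ring of crystalline differential operators and then apply Theorem~\ref{BMR}. Write $X=\spec B$ and $n=\dim B$. Since $\spec B$ is symplectic it is in particular smooth over $\bold{k}$, so $D(X)$ is defined on it and Theorem~\ref{BMR} is available. The first step is to pin down $D'(B)$ explicitly. The symplectic form on $X$ yields an isomorphism of $B$-modules $\Omega^{1}_{X}\xrightarrow{\sim}TX$ carrying $df$ to the Hamiltonian derivation $\{f,-\}$. As $\Omega^{1}_{X}$ is generated over $B$ by the differentials $df$ with $f\in B$, the $B$-submodule of $Der_{\bold{k}}(B)=TX$ generated by the Hamiltonian derivations is all of $TX$; since $TX$ is closed under commutator bracket and $p$-th power, the restricted Lie subalgebra $L$ used in the definition of $D'(B)$ coincides with $TX$ (note $B$, being commutative, has no nonzero inner derivations). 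For affine $X$ the algebra $D(X)$ is generated over $B$ by $TX$ with $[\xi,b]=\xi(b)$ and $\xi_{1}\xi_{2}-\xi_{2}\xi_{1}=[\xi_{1},\xi_{2}]$; comparing relations, $D'(B)=D'_{TX}(B)$ is the quotient of $D(X)$ by the two-sided ideal generated by the $p$-curvatures $\xi^{p}-\xi^{[p]}$, $\xi\in TX$, the relation $(i_{\xi})^{p}=i_{\xi^{[p]}}$ being the only one added to the defining relations of $D(X)$.

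Next, the $p$-curvatures lie in the center $\bggo_{T^{*}X^{(1)}}$ of $D(X)$, where they generate precisely the ideal of the zero section $X^{(1)}\subset T^{*}X^{(1)}$; hence $D'(B)\cong D(X)\otimes_{\bggo_{T^{*}X^{(1)}}}\bggo_{X^{(1)}}$ is the restriction of $D(X)$ to the zero section. By Theorem~\ref{BMR} the algebra $D(X)$ is Azumaya of rank $p^{2n}$ over $T^{*}X^{(1)}$, and restriction of an Azumaya algebra along the closed immersion of the zero section is again Azumaya; therefore $D'(B)$ is Azumaya of rank $p^{2n}$ over $\bggo_{X^{(1)}}=B^{p}$, and in particular $\bold{Z}(D'(B))=B^{p}$.

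Finally, for a closed point $y\in X$ the elements $f^{p}$ with $f\in m_{y}$ lie in $B^{p}=\bold{Z}(D'(B))$ and generate the maximal ideal of $B^{p}$ attached to $y$ (viewed as a point of $\spec B^{p}=X^{(1)}$); hence $D'(B)_{y}=D'(B)\otimes_{B^{p}}\bold{k}(y)$ is the fibre at $y$ of the Azumaya algebra $D'(B)$. A fibre of an Azumaya algebra of rank $p^{2n}$ over a point of its base is a central simple algebra of dimension $p^{2n}$ over the residue field, and over the algebraically closed field $\bold{k}$ this is the matrix algebra of dimension $p^{2n}=p^{2\dim B}$, as required.

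The step I expect to be the main obstacle is the first paragraph: one must correctly recognise $D'(B)$ as the central reduction of $D(X)$ killing the $p$-curvature and verify that these $p$-curvatures cut out exactly the zero section of $T^{*}X^{(1)}$, so that Theorem~\ref{BMR} is applied to a genuinely finite-dimensional fibre. The smoothness of symplectic varieties and the identification $B^{p}=\bggo_{X^{(1)}}$ (the Frobenius-twist bookkeeping) are routine, and once the Azumaya description is in place the rest is formal.
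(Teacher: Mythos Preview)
Your proof is correct and follows exactly the route the paper takes: identify $D'(B)_y$ with the fibre of $D(X)$ at the point $(y,0)\in T^{*}X^{(1)}$ and then invoke Theorem~\ref{BMR}. The paper compresses this identification into a single unjustified sentence, whereas you spell out why symplecticity makes it work (Hamiltonian fields $B$-generate $TX$, the extra relation $(i_\xi)^p=i_{\xi^{[p]}}$ kills exactly the $p$-curvature ideal, and that ideal cuts out the zero section).

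One small slip worth flagging: the sentence ``the restricted Lie subalgebra $L$ used in the definition of $D'(B)$ coincides with $TX$'' does not follow from your (correct) observation that Hamiltonians generate $TX$ as a $B$-module. As a $\bold{k}$-subspace the Hamiltonian fields are far from all of $TX$ (already on $\mathbb{A}^2$ with $\{x,y\}=1$ the field $x^{p-1}\partial_y$ is not Hamiltonian), and closure under bracket and $p$-th power does not change this. What actually carries the argument is that inside $D'_L(B)$ the products $b\cdot i_{\{f,-\}}$ realise every vector field, so the map to $D(X)/(\xi^p-\xi^{[p]})$ is surjective, and both sides have rank $p^{2n}$ over $B^p$; the paper is no more careful on this point than you are, so this is a cosmetic issue rather than a real gap.
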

\begin{proof}

Since $\spec B$ is a symplectic variety, it follows that $D'(B)_{y}$ is isomorphic to the stalk of $D(X)$ 
at $(y, 0)\in T^{*}X^{(1)}.$ Therefore, by Theorem \ref{BMR} we are done. 

\end{proof}
Recall that for any closed point $y$ of a Poisson variety $X$, we
denote by $d(y)$ the dimension of the symplectic leaf of $X$ which contains $y.$
The following will be crucial.

\begin{theorem} \label{key} Let $A$ be a Poisson $S$-order, 
and let $y\in \spec S$ be a closed point. Then any
finite dimensional Poisson $A$-module which as an $S$-module is supported
on $y$ has dimension divisible by $p^{d(y)}.$

\end{theorem}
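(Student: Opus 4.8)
The plan is to reduce to the symplectic (Azumaya) situation via the algebraic symplectic leaf through $y$, exactly as in the Premet–Skryabin strategy. First I would reduce to the case where $S$ is a domain: replacing $S$ by $S/\mathfrak{p}$ for a suitable minimal prime of $S$ contained in the support ideal, and $A$ by $A/\mathfrak{p}A$, we may assume $M$ is a faithful $S$-module with $S$ a finitely generated Poisson domain. Next, let $Z \subset \spec S$ be the algebraic symplectic leaf containing $y$; it is a closed symplectic leaf of some Poisson-open affine subset $\spec S_f \subset \spec S$, cut out by a prime Poisson ideal $I \subset S_f$ with $\dim S_f/I = d(y)$ and $\spec(S_f/I)$ symplectic. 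Since localization at $f$ is harmless (if $f$ vanished at $y$ the leaf argument would be applied on a smaller open set, but $y$ lies on $Z$ so we may take $f(y)\neq 0$), I would pass to $\spec S_f$ and work there; to lighten notation assume $S = S_f$.

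The key point is that $M$, being supported at the single closed point $y \in Z$, is automatically a module over the fiber at $y$ of an algebra attached to the leaf. Concretely, since $\{z,-\}$ for $z \in I$ need not annihilate $M$, I cannot simply quotient by $I$; instead I use $D'(S)$. The module $M$ is a Poisson $A$-module, hence (restricting the Poisson action along $S \to A$, or rather using that the restricted Lie algebra $L$ generated by Hamiltonians and inner derivations acts) $M$ becomes a module over $D'_{S}(A)$, and in particular over $D'(S)$ through the structure map. Because $M$ is supported set-theoretically at $y$, it is killed by $m_y^N$ for some $N$; the $p$-th power relations $i_{\{z,-\}}^p = i_{\{z,-\}^p}$ together with $\{z,-\}$ being a derivation force $m_y^{[p]} := (m_y^{(1)})$ — the ideal generated by $p$-th powers of elements of $m_y$ — to act by a nilpotent, and a standard argument (as in the passage from $D'(S,I)$ to $D'(S,I)_y$) shows $M$ is a module over $D'(S)_{y}$, the finite-dimensional fiber. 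Now the Corollary to Theorem \ref{BMR} applies: restricting further to the leaf, $D'(S_f/I)_y \cong \mathrm{Mat}_{p^{d(y)}}(\bold{k})$ is a matrix algebra, and $M$ is a module over a fiber algebra that surjects onto (or is Morita-compatible with) this matrix algebra. A module over a matrix algebra $\mathrm{Mat}_{p^{d(y)}}(\bold{k})$ has dimension divisible by $p^{d(y)}$, and this divisibility is inherited by $M$.

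The technical heart — and the step I expect to be the main obstacle — is justifying that $M$ descends to a module over the fiber $D'(S_f/I)_y$ of the crystalline differential operators on the smooth symplectic leaf, rather than merely over $D'(S)_y$. The subtlety is that $I$ itself is not a two-sided Poisson ideal of $A$, so $M$ is not literally an $A/IA$-module; what saves us is that the support condition forces the action of $I \subset S$ on $M$ to factor through a nilpotent, and on a Poisson module supported at a point of the leaf this nilpotent action is in fact zero after passing to the associated graded / using that $\{I,-\}$ preserves the leaf. I would make this precise by filtering $M$ by $\im(I^j \to \End M)$ and showing each graded piece is a genuine $D'(S/I)_y$-module, so that $p^{d(y)} \mid \dim(\text{each piece})$ and hence $p^{d(y)} \mid \dim M$. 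The remaining bookkeeping — compatibility of the $p$-center $(\bold{Z}_0)^p$ with the point $y$, and checking that the relevant fiber of $D(X)$ at $(y,0) \in T^*X^{(1)}$ is the one appearing — is routine given the Corollary above.
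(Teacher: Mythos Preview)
Your overall architecture is correct and matches the paper: localize so the leaf $Z=\spec(S'/I)$ is closed in $\spec S'$, filter $M$ by powers of $I$, and reduce to showing that every finite-dimensional module over the fiber algebra at $y$ has dimension divisible by $p^{d}$. The paper does exactly this, observing that each $I^jM/I^{j+1}M$ is a module over $D'(S',I)_y$ (the quotient of $D'(S')$ by the two-sided ideal generated by $I$ and $m_y^p$).

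The gap is in the next step. You want to conclude by saying the graded pieces are modules over $D'(B)_y$ with $B=S'/I$, and then invoke the BMR corollary $D'(B)_y\cong \mathrm{Mat}_{p^{d}}(\bold{k})$. But the graded pieces are only $D'(S',I)_y$-modules, and the natural surjection $j\colon D'(S',I)_y\twoheadrightarrow D'(B)_y$ is \emph{not} in general an isomorphism: the Hamiltonians $\{i,-\}$ for $i\in I$ vanish as derivations of $B$ (since $\{I,S\}\subset I$), yet the corresponding generators $i_{\{i,-\}}$ need not be zero in $D'(S',I)_y$. A simple $D'(S',I)_y$-module can therefore fail to factor through $D'(B)_y$, and knowing that a quotient algebra is a matrix algebra gives no divisibility for simples of the larger algebra. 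Your proposed fix --- a further filtration to force descent to $D'(B)_y$ --- does not obviously work, because those extra generators commute with $B/m_y^p$ but have no reason to act nilpotently.

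The paper sidesteps descent entirely with a direct PBW-type argument inside $D'(S',I)_y$. One chooses coordinates $x_1,\dots,x_d$ on $B/m_y^p$, lifts the commuting derivations $y_i=\partial_{x_i}$ to elements $\xi_i$ in the image of $Der'(S')$ in $D'(S',I)_y$ (so $[\xi_i,x_j]=\delta_{ij}$), lets $J$ be the subalgebra generated by the ``extra'' Hamiltonians coming from $I$, and sets $N=\operatorname{span}_{\bold{k}}\{\xi_1^{a_1}\cdots\xi_d^{a_d}:a_i<p\}$, so $\dim N=p^{d}$. One then checks $D'(S',I)_y=N\cdot J\cdot(B/m_y^p)$. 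For a simple module $V$ and $v\in V$ with $m_yv=0$, one gets $V=N\,Jv$, and an induction on degree in the $\xi_i$ (differentiating relations $\sum n_iv_i=0$ by $[x_j,-]$) shows the monomials in $N$ act freely on $Jv$. Hence $\dim V=p^{d}\cdot\dim(Jv)$. This freeness argument, not Morita descent to $D'(B)_y$, is the actual content you are missing.
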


\begin{proof}

Let us put $d(y)=d.$
For a Poisson algebra $S,$ we will denote by $Der'(S)\subset Der(S)$
the $\bold{k}$-span of all derivations of the form $a\lbrace b, -\rbrace, a, b\in S.$
Let $Y\subset \spec S$ be the symplectic leaf containing $y.$ Let $I$ be a Poisson ideal
corresponding to $\overline{Y}.$ Let $f\in S$ be an element which does not vanish on $y$ and vanishes on
$\overline{Y}-Y.$ Let us put $S'=S_f,$ then $D'(S',I)_y=D'(S, I)_y$ and
$\spec S'$ contains the closed symplectic leaf  through $y.$ Any Poisson $S$-module $M$
supported on $y$ is $D'(S')_y$-module.
Consider a descending filtration $M\supset IM\supset I^2 M\cdots \supset 0.$ 
Since $I\subset m_y,$ we have $I^lM=0$ for some $l.$
Each
quotient $I^jM/I^{j+1}M$ is a module over $D(S', I)_y.$ Therefore, it suffices to
prove that any finite dimensional $D(S', I)_y$-module has dimension
divisible by $p^{d(y)}.$

Denote $S'/I$ by $B.$ Thus, $\spec B$ is a symplectic variety.
We have a natural projection $j:D'(S', I)_{y}\to D'(B)_y.$ 
Since the localization of $B$ at $m_y$ is a regular local ring with the residue field
$\bold{k},$ we have that $B/m_y^p=\bold{k}[x_1,\cdots, x_d]/m^p$, where $m_y=(x_1,\cdots, x_d).$
Remark that $B/(m_y)^p$ is the image of $S'$ in  $D'(S, I)_y.$ 
We will denote the images of $x_1,\cdots, x_d$ in $D'(S, I)$ again by $ x_1,\cdots, x_d.$
Let
$y_1,\cdots ,y_d\in Der'(B/m_y^p)$ be such that $$[y_i, x_j]=\delta_{ij}, [y_i, y_j]=0.$$
Thus, $D'(B)_y$ is generated by $\sum_{a_i<p} {y_1}^{a_1}\cdots {y_d}^{a_d}$ as a
free left (or right) $B/m_y^{p}$-module. Let $\xi_1,\cdots ,\xi_d$ be any lifts
of $y_1,\cdots, y_d$ in the image of $Der'(S')$ in $D'(S, I)_y.$ Notice that
$[\xi_i, x_j]=\delta_{ij}.$ Let us denote by $J$ a $\bold{k}$-subalgebra of $D'(S, I)_{y}$
generated by elements of $Der'(S)$ whose images are in $I.$
Then $J$ is an ideal of the Lie algebra $D'(S, I)^p_{y}$ and $[B/m_y^p, J]=0.$
Denote by $N\subset D'(S, I)^p_{y}$ the $\bold{k}$-span of elements of the 
form ${\xi_1}^{a_1}\cdots\xi_n^{a_d}, a_i<p,$ so $\dim N=p^d.$ 
For any $\xi\in Der'(S),$ its image in $D'(B)_y$ can be written as
$\sum_{i=1}^d b_iy_i$ for some $b_i\in B/{m_y}^p.$ Thus $\xi-\sum b_i{\xi}_i\in J,$
so  $Der'(S)\subset NJ(B/{m_y}^p).$ Therefore, ${D'(S, I)^p}_y=NJ(B/{m_y}^p).$

Let $V$ be a simple $D'(S, I)^p_{y}$-module. Let $v\neq0, v\in V$ be such that
$m_yv=0.$ Then $m_yJv=0,$ so $NJv=V.$ Now we claim that if $v_1,\cdots, v_k\in Jv$
are linearly independent and $\sum_{i=1}^k n_iv_i=0, n_i\in N,$ then $n_i=0$ for all $i.$
Indeed, we have that $\sum_i [x_j, n_i]v_i=\sum_i\frac{\partial n_i}{\partial \xi_j}v_i=0.$
Proceeding by induction on the total degrees of $n_i$ in $\xi_1,\cdots, \xi_n,$ we are done.
Therefore, $\dim V=\dim N\dim(Jv)$ is a multiple of $p^d.$

\end{proof}

\begin{prop}\label{cheeky} Let $M$ be a finite dimensional Poisson module over 
a Poisson $S$-order. Then $\dim M$ is divisible
by inf$\lbrace p^{d(y}, y\in Supp(M)\subset \spec S\rbrace.$

\end{prop}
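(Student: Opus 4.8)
The plan is to reduce the statement to Theorem \ref{key} by splitting $M$ along its (necessarily finite) support in $\spec S$. Let $A$ denote the Poisson $S$-order and let $\rho\colon S\to\End_{\bold k}(M)$ be the action of $S$ on $M$ (the restriction of the $A$-module structure to the central subalgebra $S$). Since $M$ is finite dimensional, $\bar S:=\rho(S)$ is a finite-dimensional commutative $\bold k$-algebra, hence Artinian, so $\mathrm{Supp}(M)=\{y_1,\dots,y_r\}$ is a finite set of closed points of $\spec S$. Writing $\bar S=\prod_{i=1}^r R_i$ with $R_i$ local Artinian supported at $y_i$, and letting $e_1,\dots,e_r\in\bar S$ be the corresponding central idempotents, I would set $M_i:=e_iM$, so that $M=\bigoplus_{i=1}^r M_i$ and each $M_i$ is, as an $S$-module, supported on the single point $y_i$.

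The next step is to verify that each $M_i$ is a Poisson $A$-submodule of $M$. It is an $A$-submodule because $S$ is central in $A$: every $b\in A$ commutes with $\rho(S)$, in particular with $e_i$, and therefore preserves $M_i=e_iM$. To see that it is a \emph{Poisson} submodule, i.e. $\{a,M_i\}\subseteq M_i$ for all $a\in S$, consider the $\bold k$-linear operators $\delta_a:=\{a,-\}\in\End_{\bold k}(M)$. The module Leibniz identity $\{a,sm\}=\{a,s\}m+s\{a,m\}$ for $s\in S$ translates into $[\delta_a,\rho(s)]=\rho(\{a,s\})$, so the commutator $[\delta_a,-]$ maps $\bar S$ into $\bar S$ and restricts to a derivation of the commutative algebra $\bar S$. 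A derivation of a commutative ring annihilates every idempotent (if $e^2=e$ then $d(e)=2e\,d(e)$, so $(1-2e)\,d(e)=0$, and $1-2e$ is a unit since $(1-2e)^2=1$); hence $[\delta_a,e_i]=0$, which is exactly $\delta_a(M_i)\subseteq M_i$.

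With this in hand, each $M_i$ is a finite-dimensional Poisson $A$-module which, as an $S$-module, is supported on the single closed point $y_i$. Theorem \ref{key} then gives $p^{d(y_i)}\mid\dim M_i$, so in particular $\dim M_i$ is divisible by $\inf_{1\le j\le r}p^{d(y_j)}$. Summing over $i$, $\dim M=\sum_{i=1}^r\dim M_i$ is divisible by $\inf\{p^{d(y)}:y\in\mathrm{Supp}(M)\}$, as claimed.

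Given Theorem \ref{key}, the argument is essentially formal: the only point needing a genuine (though short) argument is that the $S$-primary decomposition $M=\bigoplus M_i$ is compatible with the Poisson module structure, which is precisely the observation that $\delta_a$ commutes with the idempotents $e_i$. I expect that compatibility to be the one place a careful reader will want the details spelled out, so I would present the derivation-annihilates-idempotents computation explicitly rather than leaving it implicit.
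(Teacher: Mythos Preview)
Your proof is correct and follows essentially the same strategy as the paper: decompose $M$ along its finite support in $\spec S$, check that each summand is a Poisson submodule, and apply Theorem~\ref{key} to each piece. The only difference is in how Poisson-compatibility of the primary decomposition is verified: you push the action down to the Artinian image $\bar S$ and use that a derivation of a commutative ring annihilates idempotents (using $p\neq 2$), whereas the paper argues directly that $\{a,-\}$ kills $p$-th powers of elements of $m_y$ in characteristic $p$, so the annihilator condition defining $M_y$ is preserved. Your idempotent argument is a clean alternative and makes the independence from any particular power of $m_y$ transparent.
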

\begin{proof}
Let us write $M=\oplus_{y\in Supp(M)} M_y,$ where $M_y$ is the submodule of
 elements of $M$ supported on $y.$
Obsereve that $M_y$ is actually a Poisson submodule of $M.$ Indeed,
Let $a\in M_y,$ then $(m_y)^{pi}a=0,$ for some $i.$ Therefore, for any $a\in S,$
$(m_y)^{pi}\lbrace a, m\rbrace=0,$ since $\lbrace a, m_y^{pi}\rbrace=0.$ Now
applying Theorem \ref{key} to each $M_y,$ we are done.
\end{proof}

Recall the following standard definition.

\begin{defin}

A quantization of a Poisson $S$-order $A$ is an $\hbar$-complete  
flat associative
 $\bold{k}[[\hbar]]$-algebra $A',$ such that $A=A'/\hbar A'$ and Poisson bracket of $S$ is induced 
 from the commutator  bracket of $S'.$

\end{defin}

\begin{prop} Let $A'$ be a quantization of a Poisson $S$-order $A.$ 
Let
$J\subset A'[\hbar^{-1}]$ be a two-sided ideal of finite codimension over 
$\bold{k}((\hbar)),$ then \\$\dim_{\bold{k}((\hbar))}$ $A'[\hbar^{-1}]/J$ is a multiple
of inf $(p^{d(y)}, y\in supp A/((A'\cap J)/\hbar).$
\end{prop}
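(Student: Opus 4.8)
The plan is to reduce modulo $\hbar$ and then invoke Proposition \ref{cheeky}. Set $I:=A'\cap J$. Since $\hbar$ is a unit in $A'[\hbar^{-1}]$ the ideal $J$ is $\hbar$-saturated, hence so is $I$; thus $A'/I$ is $\hbar$-torsion free, $I[\hbar^{-1}]=J$, and the canonical map $A'/I\to A'[\hbar^{-1}]/J$ is injective with $(A'/I)[\hbar^{-1}]=A'[\hbar^{-1}]/J$. Because $A'$ is $\hbar$-adically complete and $A=A'/\hbar A'$ is Noetherian, $A'$ itself is Noetherian, so $R:=A'/I$ is Noetherian; and since $\hbar$ lies in the Jacobson radical of $A'$, Krull's theorem shows $I$ is $\hbar$-adically closed, so $R$ is $\hbar$-adically complete and separated. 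The first key step is to deduce that $R$ is a finitely generated — hence, over the discrete valuation ring $\bold{k}[[\hbar]]$, free — $\bold{k}[[\hbar]]$-module, i.e.\ that $A'[\hbar^{-1}]/J$ admits an $A'$-stable $\bold{k}[[\hbar]]$-lattice. Granting this, the rank of $R$ equals $m:=\dim_{\bold{k}((\hbar))}A'[\hbar^{-1}]/J$, and writing $\bar I:=(I+\hbar A')/\hbar A'$ for the image of $I$ in $A$ we obtain $\dim_{\bold{k}}A/\bar I=\dim_{\bold{k}}R/\hbar R=m$.

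Next I would check that $\bar I$ is a Poisson ideal of the Poisson $\bold{Z}_0$-order $A$. Fix $z\in\bold{Z}_0$ and a lift $\tilde z\in A'$; because $\bold{Z}_0$ is central in $A$ we have $[\tilde z,A']\subseteq\hbar A'$. For $a\in I$ the element $[\tilde z,a]$ lies in $I$ (two-sidedness of $I$) and in $\hbar A'$, hence in $I\cap\hbar A'=\hbar I$ by $\hbar$-saturation; writing $[\tilde z,a]=\hbar a'$ with $a'\in I$, the bracket induced from the quantization satisfies $\{z,\bar a\}=\overline{a'}\in\bar I$. Therefore $\bar I$ is a Poisson ideal, so $A/\bar I$ is a Poisson $\bold{Z}_0$-order, and in particular a finite-dimensional Poisson module, of dimension $m$, over the Poisson $\bold{Z}_0$-order $A$. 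Applying Proposition \ref{cheeky} to it shows that $m=\dim_{\bold{k}}A/\bar I$ is divisible by the infimum of $p^{d(y)}$ over $y$ in the support of $A/\bar I$ inside $\spec\bold{Z}_0$ (when $m=0$ there is nothing to prove); since $A/\bar I=A/((A'\cap J)/\hbar)$, this is exactly the assertion.

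The step I expect to cost real work is the lattice/finiteness claim for $R=A'/(A'\cap J)$, and it is precisely here that $\hbar$-adic completeness of the quantization is essential; I would establish it by realizing $R$ as a $\bold{k}[[\hbar]]$-subalgebra of $\End_{\bold{k}[[\hbar]]}(L)$ for a suitable $A'$-stable lattice $L\subset A'[\hbar^{-1}]/J$, using that $A'$ is $\hbar$-complete to produce $L$. That completeness cannot be dropped: if one quantized $\bold{k}[x]$ (with the zero bracket) by the non-complete algebra $A'=\bold{k}[[\hbar]][x]$, then $J=(x-\hbar^{-1})$ has codimension one in $A'[\hbar^{-1}]$, while $A'\cap J=(\hbar x-1)$ contracts to the unit ideal of $A$, so $A'/(A'\cap J)\cong\bold{k}((\hbar))$ is not a lattice and $\dim_{\bold{k}}A/\bar I=0\neq1$. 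Once the invariant lattice is in hand, the remaining pieces of the argument are routine.
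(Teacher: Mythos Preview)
Your approach is essentially identical to the paper's: intersect $J$ with $A'$ to get an $\hbar$-saturated ideal, note that $A'/(A'\cap J)$ is a free $\bold{k}[[\hbar]]$-module of rank $m=\dim_{\bold{k}((\hbar))}A'[\hbar^{-1}]/J$, observe that its reduction mod $\hbar$ is a Poisson submodule of $A$, and apply Proposition \ref{cheeky}. The paper's proof asserts both the freeness of $A'/J'$ and the Poisson property of $J'/\hbar$ in one line each; you have correctly singled these out as the substantive points and supplied the arguments the paper omits.

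One remark on the lattice step: rather than building an $A'$-stable lattice inside $A'[\hbar^{-1}]/J$, it is slightly quicker to argue directly. Since $R=A'/I$ is $\hbar$-torsion free, any elements of $R$ that are $\bold{k}$-independent modulo $\hbar$ are $\bold{k}((\hbar))$-independent in $R[\hbar^{-1}]$; hence $\dim_{\bold{k}}R/\hbar R\le m<\infty$. Lifting a $\bold{k}$-basis of $R/\hbar R$ and using the $\hbar$-adic completeness and separatedness you already established then shows these lifts generate $R$ over $\bold{k}[[\hbar]]$, so $R$ is free. Your proposed route via $\End_{\bold{k}[[\hbar]]}(L)$ also works but is a detour. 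The counterexample with $A'=\bold{k}[[\hbar]][x]$ is a nice illustration of why completeness matters.
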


\begin{proof}
Let us put $J'=J\cap A',$ then $A'/J'$ is a free $\bold{k}[[\hbar]]$-module and
$A'[\hbar^{-1}]/J=A'/J'\otimes_{\bold{k}[[\hbar]]}\bold{k}((\hbar)).$ Therefore
$\dim_{\bold{k}((\hbar))} A'[\hbar^{-1}]/J=\dim_{\bold{k}[[\hbar]]} A'/J'.$ Let
us put $J'/\hbar=N\subset A.$ Then $N$ is a Poisson submodule of in $A,$
and $A'/J'$ is a quantization of $A/N.$ Therefore, 
$\dim_{\bold{k}[[\hbar]]} A'/J'=\dim_{\bold{k}}A/N.$
Thus, applying Corollary \ref{cheeky} to $A/N$ we are done.

\end{proof}

\section{Estimates for filtered algebras}

Let $A$ be an associative $\bold{k}$-algebra equipped with
a positive filtration by $\bold{k}$-subspaces 
$A_0\subset A_1\cdots\subset A_n\cdots, A_nA_m\subset A_{n+m}, A=\cup A_n.$
Recall that the center of the associated graded algebra $\Gr A=\oplus A_n/A_{n-1}$
becomes equipped with the natural Poisson bracket and $\Gr A$ is a Poisson
module over it. Indeed, let $d$ be the largest integer such that
$[a, b]\subset A_{n+m-d}$ for any $b\in A_m, a\in A_n, \Gr a\in \bold{Z}(\Gr A).$
Then one puts $\lbrace \Gr a, \Gr b\rbrace=[a, b]/A_{n+m-d-1}\in (\Gr A)_{n+m-d-1}.$

In this section we apply results from the previous section to representations 
of certain class of filtered affine $\bold{k}$-algebras.



Recall  for a filtered algebra $A$ the construction of the Rees algebra 
$R(A)=\oplus_{n}A_n\hbar^n\subset A[\hbar],$
where $\hbar$ is an indeterminate. Then 
$$R(A)/\hbar R(A)=\Gr A, R(A)/(\hbar-\lambda)R(A)=A, R(A)[\hbar^{-1}]=A[\hbar, \hbar^{-1}],$$
for any $\lambda\in \bold{k}, \lambda\neq 0.$
Since $\bold{Z}(A)$ inherits filtration from $A,$ we have $R(\bold{Z}(A))=\bold{Z}(R(A)).$ 
We will fix the embedding $A\subset R(A)[\hbar^{-1}]=A[\hbar, \hbar^{-1}].$
 More
generally, if $Z_0\subset Z(A)$ is a subalgebra such that $\Gr A$ is finite over $\Gr Z_0,$ then
$R(A)$ is finite over $R(\bold{Z}_0).$

We will need the following standard fact.

\begin{prop}\label{Cohen} Let $M$ be a nonegatively filtered module over
a nonnegatively filtered commutative $\bold{k}$-algebra $B,$
such that $\Gr M$ is a finitely generated Cohen-Macaulay module over $\Gr B$ 
and $\Gr B$ is a finitely generated algebra over $\bold{k}.$
Then both $M, R(M)$ are Cohen-Macaulay modules over $\Gr B, R(B)$ respectively.

\end{prop}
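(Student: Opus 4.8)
The plan is to run everything through the Rees construction $R(B)=\bigoplus_n B_n\hbar^n$, using that $R(B)$ degenerates to $\Gr B$ at $\hbar=0$ and recovers $B$ at $\hbar=1$, and that $\hbar$ and $\hbar-1$ are both nonzerodivisors on $R(M)$. First I would record the elementary points: since $R(M)=\bigoplus_{n\ge 0}M_n\hbar^n$ is a $\bold k[\hbar]$-submodule of the $\bold k[\hbar]$-torsion-free module $M[\hbar]=M\otimes_{\bold k}\bold k[\hbar]$, multiplication by $\hbar$ and by $\hbar-1$ is injective on $R(M)$, so each is a nonzerodivisor on $R(M)$; moreover $R(M)/\hbar R(M)\cong\Gr M$ as a module over $R(B)/\hbar R(B)=\Gr B$, while $R(M)/(\hbar-1)R(M)\cong M$ as a module over $R(B)/(\hbar-1)R(B)=B$. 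Since $\Gr B$ is a finitely generated $\bold k$-algebra, $R(B)$ is as well, hence Noetherian, and since $\Gr M$ is finitely generated over $\Gr B$, the module $R(M)$ is finitely generated graded over $R(B)$.

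The heart of the matter is to show $R(M)$ is Cohen--Macaulay over $R(B)$; granting this, the proposition follows at once, since $M\cong R(M)/(\hbar-1)R(M)$ is the quotient of a Cohen--Macaulay module by a nonzerodivisor, hence again Cohen--Macaulay: at a prime $\mathfrak p\in\spec B$ in the support of $M$, with preimage $\widetilde{\mathfrak p}\subset R(B)$, the localization $R(M)_{\widetilde{\mathfrak p}}$ is Cohen--Macaulay, $\hbar-1\in\widetilde{\mathfrak p}$ is a nonzerodivisor on it lying in the maximal ideal, and $M_{\mathfrak p}$ is the corresponding quotient, hence Cohen--Macaulay over $B_{\mathfrak p}$. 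To prove $R(M)$ Cohen--Macaulay I would use the standard fact that Cohen--Macaulayness of a finitely generated graded module over a Noetherian graded ring may be tested at the graded maximal ideals (reducing first to graded primes, each of which is contained in a graded maximal ideal). Let $\mathfrak m\subset R(B)$ be a graded maximal ideal. Since $R(B)$ is nonnegatively graded, $\mathfrak m$ contains the irrelevant ideal, so $\hbar\in\mathfrak m$. In $R(B)_{\mathfrak m}$ the element $\hbar$ is then a nonzerodivisor on $R(M)_{\mathfrak m}$ lying in the maximal ideal, and $R(M)_{\mathfrak m}/\hbar R(M)_{\mathfrak m}$ is a localization of $\Gr M$, hence Cohen--Macaulay by hypothesis; by the basic criterion that a finitely generated module over a Noetherian local ring is Cohen--Macaulay as soon as its quotient by a nonzerodivisor from the maximal ideal is, $R(M)_{\mathfrak m}$ is Cohen--Macaulay. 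As $\mathfrak m$ was arbitrary, $R(M)$ is Cohen--Macaulay over $R(B)$.

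The point I expect to be the crux is that one must not attempt to check Cohen--Macaulayness of $R(M)$ by treating the loci $\hbar=0$ and $\hbar\neq 0$ of $\spec R(B)$ separately: on the second locus, which is $\spec B\times\mathbb G_m$, Cohen--Macaulayness of $R(M)$ is merely a restatement of the Cohen--Macaulayness of $M$ over $B$ that is to be proved, so that approach is circular. The reduction to graded maximal ideals circumvents this precisely because every graded maximal ideal of the nonnegatively graded ring $R(B)$ contains $\hbar$, so only the degeneration to $\Gr B$ is ever invoked; the remaining ingredients are the standard depth-theoretic facts about regular elements.
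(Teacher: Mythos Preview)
Your argument is correct and follows a genuinely different route from the paper. The paper proceeds via Noether normalization: it chooses algebraically independent homogeneous elements $x_1,\dots,x_n\in\Gr B$ over which $\Gr B$ is finite, observes that the Cohen--Macaulay module $\Gr M$ is then projective (citing \cite{BBG}) and hence free over the polynomial ring $\bold{k}[x_1,\dots,x_n]$ (invoking Quillen--Suslin), and then lifts both the $x_i$ and a homogeneous free basis of $\Gr M$ to $B$ and $M$; these lifts exhibit $M$ (respectively $R(M)$) as a free module over a polynomial subring of $B$ (respectively of $R(B)$), from which Cohen--Macaulayness is immediate. Your argument, by contrast, avoids Noether normalization and Quillen--Suslin entirely, relying only on the regularity of $\hbar$ together with the standard fact that Cohen--Macaulayness of a finite graded module may be tested at graded maximal ideals. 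The paper's approach yields the extra structural information that $M$ and $R(M)$ are actually free over explicit polynomial subrings, which is sometimes useful; your approach is more intrinsic and would adapt with less modification to non-affine situations. Your observation about the circularity of a naive $\hbar=0$ versus $\hbar\ne 0$ case split is exactly the point, and the paper sidesteps it by passing to a polynomial subring rather than by exploiting the grading as you do.
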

We will recall the proof for the convenience of the reader.
\begin{proof}
Let us choose algebraically independent homogeneous elements $x_1,\cdots, x_n\in \Gr B,$
 such that
$\Gr B$ is finite over $\bold{k}[x_1,\cdots, x_n].$ Then $\Gr M$ is a finitely
generated $\bold{k}[x_1,\cdots, x_n]$-module, and since it is a Cohen-Macaulay module
it is a projective (by Lemma 2.2, 2.4 \cite{BBG}), and hence by the Quillen-Suslin theorem a free 
$\bold{k}[x_1,\cdots, x_n]$-module. Let $y_1\cdots, y_m\in \Gr M$ be a
homogeneous basis of $\Gr M$ over $\bold{k}[x_1,\cdots, x_n].$ Let
$a_1,\cdots, a_n, b_1,\cdots b_m$ be the any lifts of 
$x_1,\cdots, x_n, y_1,\cdots, y_m$ in $B, M$ respectively. Then it
follows immediately that $a_1,\cdots, a_n$ are algebraically
independent and $M, R(M)$ is a free $B, R(B)$-module
with basis $b_1,\cdots, b_n$; $b_i\hbar^{\deg{b_i}}$ respectively

\end{proof}

\begin{prop}\label{props} Suppose that $A$ is a nonnegatively
filtered $\bold{k}$-algebra, such that $\Gr A$ an affine commutative Cohen-Macaulay domain.
Suppose also that $\spec \Gr A$ consists of finitely many symplectic leaves. 
If $(\Gr A)^p\subset \Gr \bold{Z}(A)$ then for any character
$\chi$ which belongs to the smooth locus of $\spec \bold{Z}(A),$ a simple
A-module affording $\chi$ has dimension $p^{\frac{1}{2}\dim A}$.

\end{prop}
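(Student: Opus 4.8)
The plan is to pass to the Rees algebra and reduce the statement to the Azumaya property at the zero section, exactly in the spirit of the corollary to Theorem \ref{BMR}. First I would set $\bold{Z}_0=\bold{Z}(A)$ and observe that, since $(\Gr A)^p\subset \Gr\bold{Z}(A)$, the algebra $\Gr A$ is a finite module over $\Gr\bold{Z}_0$, so $A$ is finite over its center and $(A,\bold{Z}_0)$ has the shape of the Poisson-order situations studied above; moreover $R(A)$ is finite over $R(\bold{Z}_0)$. By Proposition \ref{Cohen}, applied with $M=A$ and $B=\bold{Z}_0$ (using that $\Gr A$ is a Cohen-Macaulay $\Gr\bold{Z}_0$-module — here one needs that $\Gr\bold{Z}_0$ is the normalization-type subring over which $\Gr A$ is finite, which follows from $(\Gr A)^p\subset\Gr\bold{Z}_0$), both $A$ and $R(A)$ are Cohen-Macaulay (in fact free of finite rank, locally) over $\bold{Z}_0$ and $R(\bold{Z}_0)$ respectively.

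Next I would localize at the chosen smooth character $\chi$. Because $\chi$ lies in the smooth locus of $\spec\bold{Z}(A)$ and $\spec\bold{Z}(A)$ inherits a Poisson structure from $\Gr$ whose generic symplectic leaf has full dimension $\dim\bold{Z}(A)=\tfrac12\dim\Gr A$, the leaf through $\chi$ is open, so $d(\chi)=\dim\bold{Z}(A)$. The key point is then that $A_\chi=A\otimes_{\bold{Z}_0}\bold{k}_\chi$ is, on the one hand, a finite-dimensional algebra whose simple modules we want to understand, and on the other hand a specialization of an Azumaya algebra: one shows that in a Zariski neighborhood of $\chi$ the algebra $A$ — equivalently, via the Rees construction, the sheaf $D'(\cdot)$ attached to the generic symplectic leaf — is Azumaya over $\bold{Z}_0$ of rank $p^{\dim A}$. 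Concretely, I would run the argument of Theorem \ref{key} and its preceding corollary: over the smooth symplectic locus the relevant $D'$-algebra is an Azumaya algebra of rank $p^{2\dim B}=p^{\dim\Gr A}=p^{\dim A}$ by Theorem \ref{BMR}, and $A$ localized near $\chi$ is a sheaf of such algebras (this uses Cohen-Macaulayness from Proposition \ref{Cohen} to guarantee the specialization map doesn't drop rank).

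Finally, an Azumaya algebra of rank $p^{\dim A}$ over a field has, after base change to the algebraically closed $\bold{k}$, a unique simple module, of dimension $p^{\frac12\dim A}$; hence a simple $A$-module affording $\chi$ has dimension exactly $p^{\frac12\dim A}$. The main obstacle I expect is the middle step: rigorously identifying $A$, localized near a smooth Poisson point $\chi$, with (a twist of) the crystalline differential operators on the smooth symplectic leaf so that Theorem \ref{BMR} applies — i.e. checking that the filtration/Rees degeneration is flat enough and that Cohen-Macaulayness (Proposition \ref{Cohen}) forces the fiber dimension of the Azumaya locus to be constant equal to $p^{\dim A}$ rather than merely $\ge$ or $\le$. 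Once the Azumaya property at $\chi$ is in hand, the dimension count is immediate.
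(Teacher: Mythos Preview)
Your opening paragraph is on the right track and matches the paper: one uses Proposition~\ref{Cohen} (together with $(\Gr A)^p\subset\Gr\bold{Z}(A)$, which by [\cite{T}, Lemma~2.4] is actually an equality) to conclude that $A$ is Cohen--Macaulay over $\bold{Z}(A)$, hence locally free over the smooth locus $U$.

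The genuine gap is the second step. You try to establish the Azumaya property at $\chi$ by \emph{identifying} $A$, locally near $\chi$, with crystalline differential operators on a symplectic leaf so that Theorem~\ref{BMR} applies directly. But there is no mechanism for such an identification: $\chi$ lives in $\spec\bold{Z}(A)$, whereas the symplectic leaves lie in $\spec\Gr A$; the Poisson structure is on $\Gr A$, not on $\bold{Z}(A)$; and nothing in the hypotheses produces a local isomorphism of $A$ with $D(X)$ for any $X$. You flag this yourself as ``the main obstacle,'' and indeed it is not a technicality but a genuine missing ingredient --- the proposition simply does not go through this route.

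The paper avoids this entirely. After obtaining local freeness on $U$, it invokes two external inputs: first, [\cite{T}, Theorem~2.3] shows that the complement of the Azumaya locus in $\spec\bold{Z}(A)$ has codimension $\geq 2$ and that the PI degree is $p^{\frac12\dim\Gr A}$ (this is where the finitely-many-symplectic-leaves hypothesis enters); second, [\cite{BG}, Lemma~3.6] --- a purity statement --- says that a locally free sheaf of algebras whose non-Azumaya locus has codimension $\geq 2$ is Azumaya everywhere. Combining these gives that $A|_U$ is Azumaya of degree $p^{\frac12\dim A}$, and the conclusion follows. So rather than a local identification, what you are missing is the codimension estimate on the non-Azumaya locus plus the purity argument to propagate Azumaya-ness across $U$.
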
 

\begin{proof} 
Applying [\cite{T} Lemma 2.4], we get that $\Gr \bold{Z}(A)=(\Gr A)^p.$
By [\cite{T} Theorem 2.3], the complement of the Azumaya locus of $\bold{Z}(A)$
has codimension 2, and the largest possible dimension of $A$-module is 
$p^{\frac{1}{2}\dim \Gr A}$.
Let $U\subset \spec\bold{Z}(A)$ be the smooth locus of $\spec\bold{Z}(A).$
We claim that $A_{|U}$ is a locally free sheaf over $U.$ Indeed,
by the assumption $\Gr A$ is a Cohen-Macaulay module over $(\Gr A)^p.$
But then by \ref{Cohen} $A$ is a Cohen-Macaulay module over $\bold{Z}(A),$ 
since $\Gr\bold{Z(A)}=(\Gr A)^p.$
So $A|_{U}$ is a Cohen-Macualay over $U,$ and since $U$ is nonsingular, we get
that $A|_{U}$ is locally free over $U.$ To summarize, $A|_{U}$ is
locally free and its Azumaya locus has complement of codimension
$\geq 2.$ Therefore, $A|_{U}$ is an Azumaya algebra over $U$ by [\cite {BG} Lemma 3.6].

\end{proof}

From now on in this section we will follow very closely Premet and Skryabin
 \cite{PS}.
We will use the following result from \cite{PS}.

\begin{lemma}[\cite{PS} Lemma 2.3]\label{premet} Let $A$ be finite and projective over its central
affine subalgebra
$Z_0,$ and let $L\subset Der_{\bold{Z_0}}(A)$ be a restricted Lie subalgebra of derivations. Then for any 
$i\geq 0,$ the set of characters $\chi\in \spec \bold{Z}_0,$ such that $A_{\chi}$
has an $L$-stable two sided ideal of codimension $i$ is closed.

\end{lemma}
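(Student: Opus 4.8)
The plan is to exhibit, for each $i \geq 0$, the set in question as the image of a projective morphism, or equivalently to realize it as the support of a coherent sheaf, hence closed. First I would set up the relevant parameter space: since $A$ is finite and projective over $\bold{Z}_0$, the fibers $A_\chi = A \otimes_{\bold{Z}_0} \bold{k}$ all have the same dimension, say $N$, so two-sided ideals of codimension $i$ in $A_\chi$ correspond to $(N-i)$-dimensional quotients, i.e. to points of a relative Grassmannian $\mathrm{Gr}(N-i, A)$ over $\spec \bold{Z}_0$ (with $A$ viewed as a locally free sheaf of rank $N$). The conditions ``the subspace is a two-sided ideal'' and ``the quotient is an algebra quotient'' are closed conditions (vanishing of the appropriate multiplication maps), cutting out a closed subscheme $\mathcal{I}_i \subset \mathrm{Gr}(N-i, A)$. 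Imposing in addition that the ideal be $L$-stable, i.e. $l(I_\chi) \subset I_\chi$ for every $l \in L$, is again a closed condition: it is the simultaneous vanishing of the composites $I_\chi \hookrightarrow A_\chi \xrightarrow{l} A_\chi \twoheadrightarrow A_\chi/I_\chi$, and since $L$ is finitely generated as a $\bold{Z}_0$-module (being a submodule of $Der_{\bold{Z}_0}(A)$, which is finite over $\bold{Z}_0$ because $A$ is a finitely generated $\bold{Z}_0$-algebra) it suffices to impose this for a finite generating set. So we get a closed subscheme $\mathcal{I}_i^{L} \subset \mathrm{Gr}(N-i, A)$.

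Next I would invoke properness: the structure morphism $\pi \colon \mathrm{Gr}(N-i, A) \to \spec \bold{Z}_0$ is projective, hence closed; therefore $\pi(\mathcal{I}_i^{L})$ is a closed subset of $\spec \bold{Z}_0$. Finally I would identify this image with exactly the set described in the statement: a character $\chi$ lies in $\pi(\mathcal{I}_i^{L})$ iff some $(N-i)$-dimensional $L$-stable algebra quotient of $A_\chi$ exists, i.e. iff $A_\chi$ has an $L$-stable two-sided ideal of codimension $i$. That is the desired set, so it is closed.

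One technical point deserves care and is where I expect the main friction: the passage from ``$L$-stable'' over the generic/total situation to ``$L$-stable'' fiberwise. The derivations $l \in L$ are $\bold{Z}_0$-linear, so each $l$ descends to a $\bold{k}$-linear endomorphism $\bar l_\chi$ of $A_\chi$, and the stability condition $\bar l_\chi(I_\chi) \subset I_\chi$ is what we want to encode as a closed condition on the Grassmannian. The subtlety is that $L$ need not be finitely generated as a Lie algebra, only as a $\bold{Z}_0$-module; but a $\bold{Z}_0$-module generating set $l_1, \dots, l_r$ suffices, because for $z \in \bold{Z}_0$ the operator $(zl)_\chi = \chi(z)\, l_\chi$ preserves any subspace that $l_\chi$ preserves. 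Thus the $L$-stability locus is the intersection of the finitely many closed loci where $l_{j,\chi}$ preserves the tautological subbundle, and the argument closes. (Here I am using, as permitted, only formal properties of $Der_{\bold{Z}_0}(A)$ and relative Grassmannians, exactly in the spirit of the references cited; this reproduces \cite[Lemma 2.3]{PS}.)
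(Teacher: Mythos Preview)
The paper does not give its own proof of this lemma; it simply quotes it from \cite{PS} and uses it as a black box. Your argument is the standard one and is essentially what is in \cite{PS}: parametrize candidate ideals by a closed subscheme of a relative Grassmannian over $\spec \bold{Z}_0$ and use properness of the Grassmannian morphism to conclude that the image is closed. The justification that it suffices to impose $L$-stability for a finite $\bold{Z}_0$-module generating set of $L$ is correct, since $\bold{Z}_0$ is Noetherian and $L \subset Der_{\bold{Z}_0}(A) \subset \End_{\bold{Z}_0}(A)$, the latter being finite over $\bold{Z}_0$ because $A$ is finite projective.

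One cosmetic slip: an ideal of codimension $i$ gives a quotient of dimension $i$, not $N-i$; so you want the relative Grassmannian of rank-$i$ quotients (equivalently rank-$(N-i)$ subbundles) of the locally free sheaf $A$. This does not affect the argument.
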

 Of course, $L$-stable two-sided ideal of $A$ is the same as a $D'_L(A)$-submodule
 of $A.$

From now on in this section, by
 $L\subset Der_{R(\bold{Z_0})}(R(A))$ we will always denote the restricted Lie algebra 
generated by all inner derivations and by ${\hbar}^{-d}\ad(a),$ for all
$a\in R(A),$ such that $a/\hbar\in \bold{Z}(\Gr A).$ In this setting,
we have the following

\begin{lemma} $D'_{L}(R(A))/\hbar=D'(\Gr A)$ and $D'_{L}(R(A))[\hbar^{-1}]=A[\hbar, \hbar^{-1}].$

\end{lemma}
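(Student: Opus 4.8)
The statement is the crystalline differential operator analogue of the elementary Rees-algebra identities $R(A)/\hbar R(A)=\Gr A$ and $R(A)[\hbar^{-1}]=A[\hbar,\hbar^{-1}]$, and I would prove the two halves separately. The localization half is the easy one: since $\hbar\in R(\bold{Z}_0)$ is central in $D'_L(R(A))$ it may be inverted, and forming $D'_L$ commutes with localization at a central element (immediate from the presentation, using that $l\mapsto i_l$ is $R(\bold{Z}_0)$-linear). Over $R(A)[\hbar^{-1}]=A[\hbar,\hbar^{-1}]$ every generator of $L$ becomes inner: an inner derivation stays inner, and $\hbar^{-d}\ad(a)=\ad(\hbar^{-d}a)$ because $\hbar$ is central; hence $L[\hbar^{-1}]$ consists of inner derivations of $A[\hbar,\hbar^{-1}]$, and the relation $i_{\ad b}=b$ (i.e.\ $D'_{L''}(B)=B$ whenever $L''$ consists of inner derivations of $B$) yields $D'_L(R(A))[\hbar^{-1}]=R(A)[\hbar^{-1}]=A[\hbar,\hbar^{-1}]$.

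For the reduction modulo $\hbar$ I would build the natural map and show it is an isomorphism. The ring surjection $R(A)\twoheadrightarrow\Gr A$ together with the restricted Lie algebra map $\theta\colon L\to Der_{\bold{k}}(\Gr A)$, $\theta(l)(\bar x):=\overline{l(x)}$ (well defined since $l$ preserves $\hbar R(A)$), is compatible with all the defining relations of $D'_L(R(A))$, and $\theta$ carries the generators of $L$ into the restricted Lie algebra $L'$ defining $D'(\Gr A)$: an inner derivation of $R(A)$ maps to an inner derivation of the commutative ring $\Gr A$, hence to $0$, while $\hbar^{-d}\ad(a)$ maps to the Hamiltonian derivation $\{\Gr a,-\}$, which is precisely how the Poisson bracket on $\Gr A$ was defined. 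We thus obtain a surjective algebra map $D'_L(R(A))\to D'(\Gr A)$ that kills $\hbar$, i.e.\ a surjection $g\colon D'_L(R(A))/\hbar D'_L(R(A))\twoheadrightarrow D'(\Gr A)$. To prove $g$ is injective, choose for each $v\in L'$ a lift $\tilde v\in L$ and consider the map $f\colon D'(\Gr A)\to D'_L(R(A))/\hbar$ sending $\bar x\mapsto\bar x$ and $i_v\mapsto\overline{i_{\tilde v}}$. Since every relation of $D'(\Gr A)$ is the $\theta$-image of a relation of $D'_L(R(A))$, the only thing to check is that $\overline{i_{\tilde v}}$ does not depend on the chosen lift; granting this, $f$ is a well-defined algebra homomorphism, $f$ and $g$ are mutually inverse on generators, and hence $g$ is an isomorphism.

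Independence of the lift is equivalent to the statement $\ker\theta=\hbar L$ (together with $R(\bold{Z}_0)$-linearity of $i$, giving $i_{\hbar l}\equiv 0$ mod $\hbar$), and the reverse inclusion $\ker\theta\subseteq\hbar L$ is the step I expect to be the main obstacle. I would attack it through the Rees grading $\deg\hbar=1$, under which $R(A)$, $Der_{R(\bold{Z}_0)}(R(A))$ and $L$ are graded; in fact $L$ is the $R(\bold{Z}_0)$-span of the primitive elements $\hbar^{-d(a)}\ad(a)$, with $a\in R(A)$ homogeneous and $d(a)$ the order of $\ad a$, since the bracket and the $p$-th power of two such elements are again $R(\bold{Z}_0)$-multiples of one such element (the degree estimates $d([a,a'])\geq d(a)+d(a')$ and $d(a^p)\geq p\,d(a)$ keep the relevant powers of $\hbar$ nonnegative). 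One then has to show that a relation $\sum_i\overline{r_i}\,\{\Gr a_i,-\}=0$ among the Hamiltonian fields in $Der_{\bold{k}}(\Gr A)$ lifts to an element $\sum_i r_i\,\hbar^{-d(a_i)}\ad(a_i)$ lying in $\hbar L$, a compatibility of relations that is really a flatness assertion for $D'_L(R(A))$ over $\bold{k}[\hbar]$. Equivalently, and perhaps more cleanly, one establishes a Poincar\'e--Birkhoff--Witt basis for $D'_L(R(A))$ as a graded $\bold{k}[\hbar]$-algebra, deduces that $\hbar$ is a non-zero-divisor, and compares graded dimensions with $D'(\Gr A)$ to force $g$ to be an isomorphism.
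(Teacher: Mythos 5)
Your treatment of the second equality is exactly the paper's one\-/line argument: after inverting $\hbar$ every generator of $L$ becomes inner (since $\hbar^{-d}\ad(a)=\ad(\hbar^{-d}a)$ with $\hbar$ central), and the relation $i_{\ad b}=b$ then collapses $D'_L(R(A))[\hbar^{-1}]$ onto $R(A)[\hbar^{-1}]=A[\hbar,\hbar^{-1}]$. That half is fine. For the first equality the paper offers no argument ("immediate"), and what you have done is correctly isolate where the actual content lies: the surjection $g\colon D'_L(R(A))/\hbar\to D'(\Gr A)$ is easy, injectivity is not. One small correction to your surjectivity step: under Assumption \ref{H} the algebra $\Gr A$ is only assumed finite over its center, not commutative, so an inner derivation of $R(A)$ need not die in $Der_{\bold{k}}(\Gr A)$; it maps to an inner derivation of $\Gr A$, which is by definition among the generators of the Lie algebra defining $D'(\Gr A)$, so your map still lands where it should.

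The genuine gap is the one you flag yourself and do not close: the inclusion $\ker\theta\subseteq\hbar L$ (equivalently, $\hbar$-torsion-freeness of $D'_L(R(A))$), needed to make your inverse map $f$ well defined. Knowing $l(R(A))\subseteq\hbar R(A)$ only tells you that $\hbar^{-1}l$ is a derivation of $R(A)$; it does not place $\hbar^{-1}l$ in $L$, and the restricted structure ($p$-th powers, which scale by $z^p$ rather than $z$ under central multiples) makes the graded/PBW bookkeeping you propose genuinely delicate. You sketch two strategies for this step but carry out neither, so as written the proposal establishes only a surjection $D'_L(R(A))/\hbar\twoheadrightarrow D'(\Gr A)$, not the stated isomorphism. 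Two mitigating remarks: the paper itself asserts the first equality with no proof at all, so you have gone strictly further than the source; and for the only use made of this lemma (the proof of Theorem \ref{sheep}), what is actually needed is the dictionary between $L$-stable two-sided ideals of $R(A)$ containing $\hbar$ and Poisson ideals of $\Gr A$, i.e.\ precisely that $\theta$ carries the generators of $L$ onto generators of the Lie algebra defining $D'(\Gr A)$ --- and that much your argument does prove. So your proof suffices for the role the lemma plays in the paper, even though it does not fully prove the lemma as stated.
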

\begin{proof}
First equality is immediate. The second equality follows from
the fact that $L$ consists of inner derivations in $Der(A[\hbar, \hbar^{-1}]).$

\end{proof}

We will use the following

\begin{Assumption}\label{H}

Let $A$ be a positively filtered $\bold{k}$-algebra, such
that $\bold{Z}(\Gr A)$ is finitely generated over $\bold{k}$ and $\Gr A$ is
a finitely generated module over $\bold{Z}(\Gr A).$ Moreover, assume that
there in a positive integer $n,$ such that $(\bold{Z}(\Gr A))^{pn}\subset \Gr \bold{Z}(A).$

\end{Assumption}

In what follows, we will use the following standard notations. For a subset $W$ of an affine variety $X,$ 
we will denote by $I(W)$ the reduced ideal
of zeros of $W$ in $\bggo{}(X),$ and for an ideal $I\subset \bggo{}(X), V(I)$ will denote the
set of zeros of $I$ in $X.$

We have the following

\begin{theorem}\label{sheep} Let algebra $A$ satisfy
the assumption \ref{H}. Let $\bold{Z}_0\subset \bold{Z}(A)$ be a subalgebra such that
$R(A)$ is finitely generated projective module over $R(\bold{Z}_0).$ 
Let $W\subset \spec \bold{Z}_0$ be a closed subset consisting of
points $\chi\in W$ such that $A_{\chi}$ has a two sided ideal of codimension $p^ij$ 
where $p$ does not divide $j,$ then for any 
$y'\in V(\Gr I(W))\subset \spec \Gr \bold{Z_0},$ there is $y\in \spec \bold{Z}(\Gr A)$ 
which is in the preimage of
$y'$ in $\spec \bold{Z}(\Gr A)\to \spec \Gr \bold{Z}_0,$ such that $d(y)\leq i.$
\end{theorem}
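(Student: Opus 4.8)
The plan is to pass from the algebra $A$ to its Rees algebra $R(A)$, so that the hypothesis on $W$ becomes a statement about $R(A)$-modules which degenerates, under $\hbar\to 0$, to a statement about the Poisson $\bold{Z}(\Gr A)$-order $\Gr A$; then apply Theorem \ref{key} together with the closedness result Lemma \ref{premet}. Concretely, for $\chi\in W$ the algebra $A_\chi$ has a two-sided ideal $I_\chi$ of codimension $p^i j$ with $p\nmid j$. Since $R(A)$ is finite projective over $R(\bold{Z}_0)$ and $A = R(A)/(\hbar-1)$, I would spread such an ideal out over a neighbourhood in $\operatorname{Spec} R(\bold{Z}_0)$: the set of characters $\psi$ of $R(\bold{Z}_0)$ for which $R(A)_\psi$ carries an $L$-stable two-sided ideal of codimension exactly $p^i j$ is closed in $\operatorname{Spec} R(\bold{Z}_0)$ by Lemma \ref{premet} applied to the pair $(R(A), R(\bold{Z}_0))$ and the restricted Lie algebra $L\subset \operatorname{Der}_{R(\bold{Z}_0)}(R(A))$ fixed above. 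This closed set contains the $\hbar\neq 0$ locus lying over $W$ (here one uses that $L$-stable ideals of $A$ are the same as $D'_L(A)$-submodules, and that on $\hbar\neq 0$ the algebra $D'_L(R(A))[\hbar^{-1}] = A[\hbar,\hbar^{-1}]$, so $L$-stability over $R(\bold{Z}_0)$ reduces to ordinary two-sidedness over $A$ on that locus).

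Next I would take the closure inside $\operatorname{Spec} R(\bold{Z}_0)$ of the preimage of $W$ in the $\hbar\neq 0$ locus. Because $W$ is closed and the $\hbar\neq 0$ part of $\operatorname{Spec} R(\bold{Z}_0)$ is a $\bold{G}_m$-equivariant cone minus its vertex-section, standard Rees-algebra geometry gives that this closure meets the hyperplane $\hbar = 0$, i.e. meets $\operatorname{Spec}\Gr\bold{Z}_0$, exactly in $V(\Gr I(W))$; this is the reason the statement is phrased with $\Gr I(W)$. So any $y'\in V(\Gr I(W))$ is a limit, inside that closed set, of characters coming from $W$. By the closedness from Lemma \ref{premet}, the point $y'$ therefore also lies in the locus where $R(A)_{y'} = (\Gr A)_{y'}$ carries a $D'(\Gr A)$-stable — i.e. Poisson — two-sided ideal of codimension $p^i j$ with $p\nmid j$; in particular $(\Gr A)_{y'}/(\text{that ideal})$ is a finite-dimensional Poisson module over the Poisson $\bold{Z}(\Gr A)$-order $\Gr A$, supported over the fibre of $\operatorname{Spec}\bold{Z}(\Gr A)\to\operatorname{Spec}\Gr\bold{Z}_0$ above $y'$, of dimension not divisible by $p^{i+1}$.

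Finally I would invoke Proposition \ref{cheeky}: a finite-dimensional Poisson module over a Poisson $\bold{Z}(\Gr A)$-order has dimension divisible by $\inf\{p^{d(y)} : y\in \operatorname{Supp}\}$, where the support sits in $\operatorname{Spec}\bold{Z}(\Gr A)$. Since our module has dimension $p^i j$ with $p\nmid j$, this forces $\inf\{p^{d(y)}\} \le p^i$, hence some $y$ in the support — which lies over $y'$ — has $d(y)\le i$, which is exactly the conclusion. (The passage from $R(A)$ to $\Gr A = R(A)/\hbar$ is where the Poisson structure on $\bold{Z}(\Gr A)$ and the identification $D'_L(R(A))/\hbar = D'(\Gr A)$ from the preceding lemma are used to make "$L$-stable" become "Poisson" in the limit.)

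\textbf{Main obstacle.} I expect the delicate point to be the geometry of degeneration: showing that the closure of the $\hbar\neq 0$ preimage of $W$ really hits $\hbar = 0$ in $V(\Gr I(W))$ and nowhere smaller, and simultaneously that the \emph{exact codimension} $p^i j$ (not merely $\le p^i j$) is preserved in the limit so that Lemma \ref{premet} can be applied with that value and the "$p\nmid j$" is not lost. One must be careful that semicontinuity of fibre dimension works in the right direction — a limiting quotient could a priori jump in dimension — so the argument needs the $L$-stable ideal on the special fibre to be produced as an actual flat limit (via $J\cap R(A)$ being free over $\bold{k}[\hbar]$, as in the quantization lemmas earlier in the section), rather than merely invoking closedness abstractly; reconciling these two viewpoints is the crux.
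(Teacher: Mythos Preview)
Your proposal is correct and follows essentially the same route as the paper: pass to the Rees algebra, use Lemma~\ref{premet} for the restricted Lie algebra $L\subset\operatorname{Der}_{R(\bold{Z}_0)}(R(A))$ to get a closed locus $X\subset\spec R(\bold{Z}_0)$ of characters admitting an $L$-stable ideal of codimension $p^ij$, observe $W\times(\mathbb{A}^1\setminus\{0\})\subset X$ since $L$ acts by inner derivations on the $\hbar\neq 0$ locus, identify $\overline{W\times(\mathbb{A}^1\setminus\{0\})}\cap\{\hbar=0\}=V(\Gr I(W))$, read off a Poisson ideal of $\Gr A$ of that codimension over each $y'$, and finish with Proposition~\ref{cheeky}.

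Your ``main obstacle'' is not actually an obstacle: Lemma~\ref{premet} asserts closedness of the locus where an $L$-stable ideal of \emph{exactly} a given codimension exists, so no semicontinuity subtlety or flat-limit argument is needed---once $W\times(\mathbb{A}^1\setminus\{0\})\subset X$ and $X$ is closed, the closure (hence $V(\Gr I(W))$) lies in $X$ automatically, and the codimension $p^ij$ with $p\nmid j$ is preserved verbatim at $\hbar=0$.
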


\begin{proof}
Recall that $R(z_0)[{\hbar}^{-1}]=\bold{Z}_0[\hbar, \hbar^{-1}].$
We again denote by $L$ the Lie subalgebra of Der(R(A)) as in above lemma.
$\spec \bold{Z}_0\ltimes (\mathbb{A}^1-{0})\hookrightarrow \spec R(\bold{Z}_0).$ 
We a hve a map $\rho: \bold{Z}(\Gr A)\to HH^1(R(A))$ defined
as $\rho(a)=\frac{1}{{\hbar}^d}\ad(a'),$ where $a'\in p^{-1}(a), a\in \bold{Z}(\Gr A).$
and $p:R(A)\to R(A)/\hbar=\Gr A$ is the quotient map.
Let $X$ denote the set of all closed points $\chi\in \spec R(\bold{Z_0})$ such that
$R(A)_{\chi}$ has a two sided $L$-stable ideal of codimension $p^ij.$
By lemma \ref{premet}, $X$ is closed.
We claim that $X\cap \lbrace \hbar=0 \rbrace$ is the set of all characters 
$\chi:\Gr\bold{Z_0}\to k$ such that there is a Poisson ideal in $\Gr A$
containing $ker(\chi)$ of codimension $p^ij.$ Indeed, for two-sided ideal
$I\subset\ Gr$ to be closed under the map $\lbrace{a, \rbrace}$ for all
$a\in \bold{Z}(\Gr A)$ is the same as ideal $p^{-1}(I)$ being $L$-stable.
Similarly, $X\cap \lbrace \hbar\neq 0$ consists of characters 
$\chi:\bold{Z}_0(A)[\hbar, \hbar^{-1}]\to \bold{k}$ such that
$A_{\chi}$ has a two-sided ideal of codimension $p^ij.$ Indeed,
given a two sided ideal $I\subset A$ which contains $ker(\chi)\cap \bold{Z}_0(A)$
ideal $p^{-1}(I)$ is $L$-stable, since $\hbar$ is invertible
on $R(A)/p^{-1}(I),$ where $p:R(A)\to R(A)/(\hbar-\chi(\hbar))=A$ is the
quotient map.

Therefore,
$W\times (\mathbb{A}^1-{0})\subset X,$  In particular, 
$\overline{W\times (\mathbb{A}^1-{0})}\cap (\hbar=0)\subset X.$
 But $\overline{W\times (\mathbb{A}^1-{0})}\cap (\hbar=0)$
is precisely $V(\Gr I(W)).$ Thus for any character $\chi \in V(\Gr(I(W)))$
Algebra $(\Gr A)/{m_{\chi}}(\Gr A)$ has a Poisson ideal of codimension
$p^ij,$ where $m_{\chi}$ is the maximal ideal of $\Gr \bold{Z}_0\subset (\Gr A)^p$
corresponding to $\chi.$. Let us choose such an ideal $J\subset (\Gr A)/{m_{\chi}}^p.$ 
Let $\lbrace y_i, i\in I\rbrace$ be the (finite)  preimage of $\chi$ under the map 
$\spec \Gr A\to \spec \Gr \bold{Z}_0.$ Then $\Gr A/J$ is a Poisson
$\bold{Z}(\Gr A)$- module
supported on $\lbrace y_i \rbrace.$ 
Applying Corollary \ref{cheeky}, we are done,

\end{proof}
Recall that for a character $\chi,$ $i(\chi)$ denotes the largest power of $p$
which divides all dimensions of irreducible representations affording $\chi.$

\begin{cor}\label{baboon} 
Let $A$ satisfy the assumption \ref{H}.
Let $\bold{Z}_0$ be a subalgebra of $\bold{Z}(A)$ such that $R(A)$
 is a finitely generated projective module over $R(\bold{Z}_0).$ 
 Let $G\subset Aut(R(A), R(\bold{Z}_0))$ be a subgroup of the group of 
 $\bold{k}[\hbar]$-algebra automorphisms of $R(A)$ which preserve $R(\bold{Z}_0).$
Then, for any $\chi\in \spec \bold{Z}_0,$ and any 
$y\in V(\Gr I(\overline{G\chi}))\subset \Gr \bold{Z}_0,$ we have $i(\chi)\geq inf d(y'),$ where
 $y'\in \spec \bold{Z}(\Gr A)$ runs through preimages of $y$
under the map $\spec \bold{Z}(\Gr A)\to \spec \Gr \bold{Z}_0.$
\end{cor}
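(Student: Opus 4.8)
The plan is to reduce the statement about irreducible representations of $A$ to the codimension statement of Theorem \ref{sheep}, using the group action to enlarge the closed set $W$ on which the hypothesis of that theorem can be verified. First I would recall that if $V$ is a simple $A_\chi$-module of dimension $\dim V = p^i j$ with $p \nmid j$, then $\Ann_A(V)$ is a two-sided ideal and $A_\chi / \Ann_{A_\chi}(V)$ embeds into $\End_{\bold k}(V)$, so $A_\chi$ has a two-sided ideal of codimension dividing $(p^i j)^2$; more to the point, the quantity $i(\chi)$ is controlled by the $p$-adic valuations of $\dim V$ over all simple $V$ affording $\chi$. Thus to prove $i(\chi) \geq \inf_{y'} d(y')$ it suffices to show: for every simple $A_\chi$-module $V$, writing $\dim V = p^{i} j$ with $p \nmid j$, one has $i \geq \inf_{y'} d(y')$ for the relevant preimages $y'$. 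Equivalently, if $A_\chi$ has a two-sided ideal of codimension $p^i j$ with $p \nmid j$, then there is a preimage $y'$ of some $y \in V(\Gr I(\overline{G\chi}))$ with $d(y') \leq i$.

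The second step is the key idea: replace $\chi$ by its orbit. Since $G \subset \Aut(R(A), R(\bold Z_0))$ acts by $\bold k[\hbar]$-algebra automorphisms preserving $R(\bold Z_0)$, it acts on $\spec \bold Z_0$ (the fiber $\hbar = 1$, say), and the property ``$A_{\chi'}$ has a two-sided ideal of codimension $p^i j$'' is invariant under this action: an automorphism $g$ carries such an ideal in $A_{\chi'}$ to one in $A_{g\chi'}$ of the same codimension. Hence the set $W$ of all $\chi' \in \spec \bold Z_0$ for which $A_{\chi'}$ has a two-sided ideal of codimension $p^i j$ is $G$-stable and, by Lemma \ref{premet} (applied with $L = \Der_{\bold Z_0}(A)$, so that $L$-stable ideals are just all two-sided ideals), it is closed. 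Therefore if $\chi \in W$ then $\overline{G\chi} \subset W$, and consequently $W \supseteq \overline{G\chi}$, so that $V(\Gr I(W)) \supseteq V(\Gr I(\overline{G\chi}))$.

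The third step is to feed this $W$ into Theorem \ref{sheep}. That theorem, applied to the closed $G$-stable set $W$ just constructed, says that for any $y \in V(\Gr I(W)) \subset \spec \Gr \bold Z_0$ there is a preimage $y' \in \spec \bold Z(\Gr A)$ of $y$ with $d(y') \leq i$. Since $V(\Gr I(\overline{G\chi})) \subseteq V(\Gr I(W))$, the same conclusion holds for every $y \in V(\Gr I(\overline{G\chi}))$: there exists a preimage $y'$ with $d(y') \leq i$, hence $\inf_{y'} d(y') \leq i$ over all preimages. Running this for every simple $A_\chi$-module and taking into account that $i(\chi)$ is the minimum of the exponents $i$ occurring, we obtain $i(\chi) \geq \inf_{y'} d(y')$, which is the claim.

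The main obstacle I anticipate is the bookkeeping around $p$-adic valuations: Theorem \ref{sheep} is phrased in terms of a two-sided ideal of codimension $p^i j$ with $p \nmid j$, whereas what one gets most naturally from a simple module $V$ of dimension $p^i j$ is the annihilator, of codimension $(p^i j)^2 = p^{2i} j^2$, or the codimension-$\dim V$ ideal coming from a single cyclic vector — one must be careful that the exponent of $p$ extracted matches the one that then bounds $d(y')$, and that passing to the orbit and then to the associated graded does not lose a factor of $p$. A second, more technical point is checking that $G$ genuinely acts on $\spec \bold Z_0$ compatibly with the reduction $R(\bold Z_0) \to \bold Z_0$ (i.e. at $\hbar = 1$) and that $G$-invariance of the codimension-$p^i j$ condition is literally preserved under this specialization; this should be routine but needs to be stated. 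Everything else is a direct concatenation of Lemma \ref{premet}, Theorem \ref{sheep}, and Corollary \ref{cheeky}.
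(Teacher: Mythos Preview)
Your approach is essentially the paper's: produce a closed $G$-stable subset $W\subset\spec\bold Z_0$ containing $\chi$ (so that $\overline{G\chi}\subset W$) and then invoke Theorem \ref{sheep}; the paper takes $W=\{\chi':i(\chi')=i(\chi)\}$ while you take $W$ defined by the existence of a two-sided ideal of fixed codimension, which amounts to the same thing. Two remarks. First, to make Lemma \ref{premet} say that the locus of $\chi'$ with a two-sided ideal of given codimension is closed, take $L$ to be the Lie algebra of \emph{inner} derivations (so that $L$-stable simply means two-sided, as in the paper's convention $D'_L(A)=A$), not all of $\mathrm{Der}_{\bold Z_0}(A)$. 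Second, the $p$-adic bookkeeping worry you flag is real and is not addressed in the paper's own two-line proof either: the annihilator of a simple module of dimension $p^ij$ has codimension $p^{2i}j^2$, so Theorem \ref{sheep} literally yields $i\geq\tfrac12 d(y')$; this is exactly the bound the paper actually uses downstream in Corollary \ref{cherednik}.
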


\begin{proof} Let $W\subset \spec \bold{Z}_0$ be a set of characters $\chi'$
 such that $i(\chi')=i(\chi).$ So $\chi\in W$ and $W$ is closed by Lemma \ref{premet}.
Therefore $\overline{G\chi}\subset W,$ and by Corollary \ref{cheeky}, we are done.

\end{proof}

\begin{cor}\label{travis}
Let $A$ be satisfy the assumption \ref{H}. 
 Then if $\spec \bold{Z}(\Gr A)$ is a union of algebraic 
symplectic leaves, and if $\spec \bold{Z}(\Gr A)$ has at most one (the origin) zero dimensional
symplectic leaf, then all but finitely many irreducible representations
of $A$ have dimension divisible by $p.$

\end{cor}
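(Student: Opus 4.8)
The plan is to reduce the statement to the finiteness of the set $W=\{\chi\in\spec\bold{Z}(A):i(\chi)=0\}$. Under Assumption \ref{H} the finitely generated algebra $\bold{Z}(\Gr A)$ is integral over $\Gr\bold{Z}(A)$ (each $z$ satisfies $z^{pn}\in\Gr\bold{Z}(A)$), hence finite over it, so $A$ is finite over its center and therefore a PI algebra; thus simple $A$-modules have dimension bounded by the PI-degree and the map from isomorphism classes of simple modules to $\spec\bold{Z}(A)$ has finite fibres. Every simple module of dimension prime to $p$ affords a character lying in $W$, so once $W$ is shown to be finite only finitely many such simple modules exist, which is exactly the assertion.

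First I would record a representation-theoretic reformulation: for $\chi$ in $\spec\bold{Z}(A)$ (or, below, in $\spec\bold{Z}_0$) one has $i(\chi)=0$ if and only if $A_\chi$ possesses a two-sided ideal of codimension $j$ with $p\nmid j$. Indeed, if such an ideal exists then $A_\chi$ modulo it has dimension prime to $p$, hence by Jordan--H\"older has a simple quotient of dimension prime to $p$; conversely the annihilator of such a simple module has codimension $(\dim V)^2$, again prime to $p$. Since $\dim V$ is bounded, only finitely many values of $j$ occur. Next I would pass to a Noether-normalizing central subalgebra: choose, by graded Noether normalization, homogeneous $x_1,\dots,x_m\in\bold{Z}(\Gr A)$ over which $\bold{Z}(\Gr A)$, and hence $\Gr A$, is finite; by Assumption \ref{H} each $x_i^{pn}$ lies in $\Gr\bold{Z}(A)$, so lift it to an element $z_i\in\bold{Z}(A)$ of the corresponding filtration degree. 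The $z_i$ have algebraically independent symbols, so $\bold{Z}_0:=\bold{k}[z_1,\dots,z_m]\subset\bold{Z}(A)$ is a polynomial algebra with $\Gr\bold{Z}_0=\bold{k}[x_1^{pn},\dots,x_m^{pn}]$; since $\Gr A$ is finite over $\Gr\bold{Z}_0$, also $R(A)$ is finite over $R(\bold{Z}_0)$.

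Granting that $R(A)$ is projective over $R(\bold{Z}_0)$, Lemma \ref{premet} (with $L$ the inner derivations, for which every two-sided ideal is $L$-stable) shows that for each of the finitely many relevant codimensions $j$ the locus of $\chi\in\spec\bold{Z}_0$ with an ideal of codimension $j$ is closed; hence $W':=\{\chi\in\spec\bold{Z}_0:i(\chi)=0\}$ is closed. Now apply Theorem \ref{sheep} with $i=0$ and this $W'$: every point $y'\in V(\Gr I(W'))\subset\spec\Gr\bold{Z}_0$ admits a preimage $y\in\spec\bold{Z}(\Gr A)$ with $d(y)\le 0$, so $y$ lies on a zero-dimensional symplectic leaf. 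By hypothesis the only such leaf is the origin, so $y$ is the origin of $\spec\bold{Z}(\Gr A)$; as $\Gr\bold{Z}_0\hookrightarrow\bold{Z}(\Gr A)$ is a finite graded inclusion, $y'$ is the origin of $\spec\Gr\bold{Z}_0$. Therefore $V(\Gr I(W'))$ is a single point (or empty), whence $\dim W'=\dim V(\Gr I(W'))\le 0$ because passing to the associated-graded ideal preserves Krull dimension; thus $W'$ is finite. Finally $\spec\bold{Z}(A)\to\spec\bold{Z}_0$ is a finite morphism and $i(\chi)=0$ forces $i(\chi|_{\bold{Z}_0})=0$, so $W$ is contained in the finite preimage of $W'$ and is itself finite.

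The step I expect to be the genuine obstacle is the projectivity of $R(A)$ over the polynomial algebra $R(\bold{Z}_0)$: this is equivalent to $\Gr A$ being Cohen--Macaulay over $\Gr\bold{Z}_0$ (compare Proposition \ref{Cohen}) and is not guaranteed by Assumption \ref{H} alone. I would handle it by localizing --- replacing $R(\bold{Z}_0)$ by a suitable graded principal localization over which $R(A)$ becomes locally free, running the above argument there, and accounting for the complementary closed locus (which meets $\spec\bold{Z}_0$ in a proper closed subset) by an induction on dimension. Making this localization simultaneously compatible with Theorem \ref{sheep}, Lemma \ref{premet}, and the identification of the associated-graded ideal is the technical crux, and is where I would spend most of the effort.
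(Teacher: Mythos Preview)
Your overall strategy---reduce to finiteness of $W=\{\chi:i(\chi)=0\}$, pass to a polynomial subalgebra $\bold{Z}_0$, and feed the closed set $W'$ into Theorem~\ref{sheep} with $i=0$---is sound \emph{once} projectivity of $R(A)$ over $R(\bold{Z}_0)$ is known. You correctly identify that projectivity is the crux and that Assumption~\ref{H} alone does not give it. However, your proposed fix (localize to an open where $R(A)$ is projective and handle the complementary hypersurface by induction on dimension) does not close the gap: there is no evident inductive framework, since restricting to $V(f)\subset\spec\bold{Z}_0$ does not produce a new filtered algebra satisfying Assumption~\ref{H}, and nothing you have shown prevents $W'\cap V(f)$ from being all of $V(f)$. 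Both Lemma~\ref{premet} and Theorem~\ref{sheep} genuinely need projectivity, so removing that hypothesis requires a different idea, not a localization trick.

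The paper avoids the projectivity hypothesis altogether by switching from ``existence of an ideal of given codimension'' to the trace criterion: for a finite-dimensional algebra $S$, every simple $S$-module has dimension divisible by $p$ if and only if $1\in[S,S]$. Thus one must show that the support of $1$ in $A/[A,A]$ over $\spec\bold{Z}(A)$ is finite. The paper computes this support by degeneration through $D'_L(R(A))$: the support $Y$ of $1$ in $D'_L(R(A))/[D'_L(R(A)),D'_L(R(A))]$ is closed, equals $X\times(\mathbb{A}^1\setminus\{0\})$ over $\hbar\neq 0$ (since $D'_L(R(A))[\hbar^{-1}]=A[\hbar,\hbar^{-1}]$), and over $\hbar=0$ is contained in the support of $1$ in $D'(\Gr A)/[D'(\Gr A),D'(\Gr A)]$. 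A short lemma, using Theorem~\ref{key}, shows this last support is just the origin of $\spec\bold{Z}(\Gr A)^p$, whence $X$ is finite. The point is that forming $D'_L(R(A))$ and taking $HH_0$ needs no projectivity of $R(A)$ over any central subalgebra---finiteness of $D'_L(R(A))$ over its own center is automatic---so the whole Noether-normalization/projectivity apparatus is bypassed. Your route would work under an extra Cohen--Macaulay hypothesis on $\Gr A$ (cf.\ Proposition~\ref{Cohen}), but as stated the corollary does not assume this, and the paper's commutator-quotient argument is what carries the general case.
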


\begin{proof}

Recall that given a finite dimensional $\bold{k}$-alebra $S,$ 
all simple $S$-modules have dimensions divisible by $p$  if and only if 
$1\in [S, S].$ Therefore, we are
required to show that the support of $1\in A/[A, A]$ on $\spec \bold{Z}(A)$ is
a finite set. Let $Y$ be the support of $1\in D'_{L}(R(A)/[D'_{L}(R(A), D'_{L}(R(A)]$ on
$\spec \bold{Z}(D'_{L}(R(A)).$ We have that $Y\cap \lbrace \hbar\neq 0\rbrace$ is 
$X\times({\mathbb{A}-\lbrace 0\rbrace}),$ where $X$ is the support of $1\in A/[A, A]$
 on $\spec \bold{Z}(A).$
On the other hand,  $Y\cap \lbrace \hbar=0\rbrace$ is the support of 
$1\in D'(\Gr A)/[D'(\Gr A), D'(\Gr A)]$ in $\spec \bold{Z}(D'(\Gr A)).$
By the following lemma, the letter set is finite, so $X$ has to be finite and we are done.

\begin{lemma} Under the assumption of the theorem, 
$1\in D'(\Gr A)/[D'(\Gr A), D'(\Gr A)]$ is supported on the
origin of $\spec \bold{Z}(\Gr A)^p.$

\end{lemma}
\begin{proof} 
 Indeed, if
$y\in \spec \bold{Z}(\Gr A)^p, y\neq 0$ (where 0 denotes
the origin of $\spec \bold{Z}(\Gr A)$), then all representations
of $D'(\Gr A))/{m_y}^p$ are divisible by $p^{d(y)}$(as in the
proof of Theorem \ref{key}. But since $d(y)>0,$
we conclude that all simple $D'(\Gr A))/{m_y}^p$-modules have dimensions divisible by $p.$
Therefore,\\ 
$1\in D'(\Gr A))/[D'(\Gr A), D'(\Gr A)]+{m_y}^pD'(\Gr A)$
so $y\notin Supp(1).$

\end{proof}
\end{proof}

In particular, the assumptions of the above are satisfied when $\spec \bold{Z}(\Gr A)$ consists
of finitely many symplectic leaves. This result can be thought of as a positive 
characteristic analogue of a result due to Etingof-Schedler \cite{ES}, which states
that if $A$ is a nonnegatively filtered algebra over $\mathbb{C}$ such that
$\Gr A$ is finite over its center and $\spec \bold{Z}(\Gr A)$
 is a union of finitely many symplectic leaves, then $A$ has a finitely
many nonisomorphic finite dimensional simple modules.

\section{Applications to rational Cherednik algebras}

At first, we will recall the decomposition of $V/W$ into symplectic
leaves [\cite{BGo}, Proposition 7.4], where $V$ is a symplectic vector space over $\bold{k}$
and $W\subset Sp(V)$ is a finite group such that $p$ does not divide $|W|$.
 Given a subgroup $H\subset  W$
denote by $V^H_{0}$ the set of all $v\in V^{H}$ such that the stabilizer of $v$ is
$H.$ Then we have the decomposition of $V/W$ into symplectic leaves
 $V/W=\cup V^H_{0}/W,$ where $H$ ranges through all conjugacy
classes of subgroups of $W.$ In particular, for $v\in V/W$ dimension
of the symplectic leaf through $v$ is $\dim V^{W_v},$ where $W_v$ is
(a conjugacy class) the stabilizer of $v.$

Recall that for a class function $c:W\to \bold{k},$ Etingof-Ginzburg 
defined a symplectic reflection algebra $H_c(W, V).$ This
algebra comes equipped with the natural filtration such that
$\Gr H_c(W, V)=\bold{k}[W]\ltimes \Sym V,$ (the PBW property, \cite{EG} Theorem 1.3).

\begin{prop} Let $W\subset Sp(V)$ be a finite group.
If $p$ does not divide $|W|,$ then for 
a symplectic reflection algebra $H_{c}(W, V)$, 
there are only finitely many irreducible representations
whose dimension is not divisible by $p.$

\end{prop}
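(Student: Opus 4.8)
The plan is to verify that the symplectic reflection algebra $A = H_c(W,V)$ satisfies Assumption \ref{H}, and then to apply Corollary \ref{travis}. For the assumption: the PBW property gives $\Gr A = \bold{k}[W]\ltimes \Sym V$, which is a finitely generated module over its center. Since $p\nmid|W|$, the subring of $W$-invariants $(\Sym V)^W$ sits inside $\bold{Z}(\Gr A)$, and in fact $\bold{Z}(\bold{k}[W]\ltimes \Sym V) = (\Sym V)^W$; this is finitely generated over $\bold{k}$, and $\Gr A$ is finite over it. The Frobenius point is that $(\Sym V)^{(p)}\subset \Gr\bold{Z}(A)$: the elements $v^p$ for $v\in V$, suitably symmetrized over $W$, lift to central elements of $H_c(W,V)$ (this is the standard $p$-center type statement for symplectic reflection algebras in characteristic $p$), so some power $((\Sym V)^W)^{pn}$, or even $n=1$, lies in $\Gr\bold{Z}(A)$. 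Hence Assumption \ref{H} holds with $\bold{Z}(\Gr A) = (\Sym V)^W$.

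Next I would identify $\spec\bold{Z}(\Gr A) = \spec(\Sym V)^W = V/W$ as a Poisson variety, where the Poisson structure is induced by the symplectic form on $V$. By the Brown--Gordon decomposition recalled just before the proposition, $V/W = \bigcup_H V^H_0/W$ is a union of symplectic leaves, indexed by conjugacy classes of subgroups $H\subset W$; in particular there are only finitely many leaves, each of which is locally closed and hence an algebraic symplectic leaf in the sense of the Definition in Section 2. The dimension of the leaf through the image of $v$ is $\dim V^{W_v}$. The unique zero-dimensional leaf is the one where $V^{W_v} = 0$, i.e.\ where $W_v$ acts on $V$ with no nonzero fixed vectors; and the point $0\in V/W$ (image of $0\in V$, whose stabilizer is all of $W$) lies in the leaf $V^W_0/W$ which has dimension $\dim V^W$. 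If $V^W = 0$ then the origin is itself the unique zero-dimensional leaf; if $V^W\neq 0$ then no leaf is zero-dimensional at all. Either way the hypothesis of Corollary \ref{travis} — that there is at most one zero-dimensional symplectic leaf (the origin) — is satisfied, provided one checks that any subgroup $H$ with $V^H = 0$ has all of $V/W$-points in its stratum collapsing to the single $W$-orbit of $0$; but $V^H = 0$ forces the relevant $v$ to be $0$, so indeed the only candidate for a zero-dimensional leaf is the image of $0$.

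With Assumption \ref{H} verified and $\spec\bold{Z}(\Gr A)$ a union of finitely many (hence algebraic) symplectic leaves with at most one zero-dimensional leaf at the origin, Corollary \ref{travis} applies directly and yields that all but finitely many irreducible representations of $H_c(W,V)$ have dimension divisible by $p$. The main obstacle I expect is the careful bookkeeping in the zero-dimensional-leaf condition: one must rule out the possibility of several distinct zero-dimensional leaves, which amounts to the observation that $V^H_0$ is nonempty and zero-dimensional only when $H$ fixes no nonzero vector, and then $V^H_0 = \{0\}$, so all such strata coincide with $\{0\}/W$; combined with the remark that $0\in V/W$ lies in this stratum precisely when $V^W = 0$. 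A secondary technical point is confirming the precise form of the $p$-center for $H_c(W,V)$ so that Assumption \ref{H} genuinely holds — i.e.\ that enough $p$-th powers from $\Sym V$ survive into $\bold{Z}(H_c(W,V))$ after passing to the associated graded — but this is the expected characteristic-$p$ behaviour for these algebras and follows from a direct computation with the defining relations.
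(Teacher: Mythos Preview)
Your proposal is correct and follows essentially the same route as the paper: verify Assumption \ref{H} for $A=H_c(W,V)$ using the PBW property and the identification $\bold{Z}(\Gr A)=(\Sym V)^W$, then invoke Corollary \ref{travis} via the Brown--Gordon symplectic-leaf decomposition of $V/W$. The one place where the paper is sharper is the $p$-center step you flag as a ``secondary technical point'': rather than appealing to a direct computation, the paper cites Etingof's theorem (\cite{BGF}, Theorem 9.1.1), which gives precisely $\Gr\bold{Z}(H_c)=((\Sym V)^W)^p$, so Assumption \ref{H} holds with $n=1$. Your careful check that the only possible zero-dimensional leaf is the origin is more explicit than the paper's own proof, which simply asserts that the hypotheses of Corollary \ref{travis} are met.
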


\begin{proof}

The center of $\Gr H_{1, c}=\bold{k}[W]\ltimes \Sym V$ is
$(\Sym V)^{\Gamma},$ so $\Gr H_c(W, V)$ is finite over its center.
 Also, by a theorem of Etingof (\cite{BGF}, Theorem 9.1.1)
  $\Gr \bold{Z}(H_{1, c})=((\Sym V)^{\Gamma})^p.$ Since as explained
  above, $\spec (\Sym V)^{\Gamma}=V/W$ is a union of finitely many
  symplectic leaves, all the assumptions of \ref{travis} are satisfied and we are done.

\end{proof}

Important class of symplectic reflection algebras consists of Rational Cherednik algebras.
Let us recall their definition.

Let $\mf{h}$ be a finite dimensional vector space over $\bold{k}.$ 
Given a finite group $W\subset GL(\mf{h})$, let $S\subset \Gamma$ be 
the set of pseudo-reflections (recall that $s\in W$ is a pseudo-reflection if $Im(Id-s)$ 1-dimensional). 
Let $\alpha_s\in \mf{h}^{*}$ be the generator of $Im(s-1)|_{\mf{h}^{*}}$ and 
$\alpha_s' \in \mf{h}$ be the generator of $Im(Id-s)$ such that $(\alpha_s, \alpha_s')=2.$ Let
$c:S\to \bold{k}$ be a $W$-invariant function.
The rational Cherednik
algebra, $H_{c}(W, \mf{h})$, as introduced by Etingof and Ginzburg \cite{EG}, 
is the quotient of the skew group
algebra of the tensor algebra, $\bold{k}[W]\ltimes T (\mf{h} \oplus \mf{h}^{*})$, 
by the ideal generated by the relations
$$[x, x']=0, [y, y']=0, [y, x]=(y, x)-\sum c_s (y, \alpha_s)(x, \alpha_s')s,$$
$x, x'\in \mf{h}, y, y'\in \mf{h}^{*}.$

There is a standard filtration on $H_c$ given by setting $\deg x=1, \deg y=1, deg (g)=0$ for
all $x\in V, y\in \mf{h}^{*}, g\in \Gamma.$ The PBW property of $H_c(W, \mf{h})$ says that 
$\Gr H_c=\bold{k}\Gamma\ltimes \Sym (\mf{h}\oplus \mf{h}^{*})$. When $c$ is identically
0 then $H_0(W, \mf{h})=\bold{k}\ltimes D(\mf{h}),$ where $D(\mf{h})$ is the ring of
(crystalline) differential operators on $\mf{h}.$

Algebra $H_c(W, \mf{h})$ has a distinguished central subalgebra $\bold{Z}_0=\bold{Z'}\otimes\bold{Z"},$
where $\bold{Z}^{'}$(respectively $\bold{Z}"$) denotes $((\Sym \mf{h}^{*})^W)^p(((\Sym \mf{h})^W)^p$
[\cite{BC}, Proposition 4.2].

For a rational Cherednik algebra $H_{c}(W, \mf{h})$ 
 we have
the spherical subalgebra $B_{c}=e H_{c}(W, \mf{h})e$ $(e=\frac{1}{|W|}\sum g\in W)$. 
Algebra $B_{c}$ inherits a filtration from $H_{c}(W,\mf{h})$ and 
$\Gr B_{c}=(\Sym (\mf{h}\oplus \mf{h}^{*}))^{W}.$
Then $e\bold{Z_0}\subset \bold{Z}(B_c)$

\begin{cor}\label{cherednik} Assume that $p$ does not divide $|W|.$ 
Let $\chi:e\bold{Z}^{'}\to \bold{k}$ be a central character. Let us identify 
$\chi:e((\Sym \mf{h}^{*})^{W})^p\to \bold{k}$
with a point in $\mf{h}/W.$ Let $W_{\chi}$ be the stabilizer in $W$
of a preimage of $\chi$ in $\mf{h}.$ Then all irreducible representations of $B_c$ affording $\chi$
have dimensions divisible by  $p^{\mf{h}^{W_\chi}}.$
\end{cor}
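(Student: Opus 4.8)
The plan is to apply Corollary~\ref{baboon} with $A=B_c$ and with $e\bold{Z}_0=e\bold{Z}'\otimes e\bold{Z}''$ as the central subalgebra, together with a suitable torus $G$ of automorphisms. First one checks the hypotheses. $\Gr B_c=(\Sym(\mf{h}\oplus\mf{h}^{*}))^W$ is commutative and finitely generated, and by the theorem of Etingof cited above (\cite{BGF}, Theorem 9.1.1) $\Gr\bold{Z}(B_c)=((\Sym(\mf{h}\oplus\mf{h}^{*}))^W)^p=(\bold{Z}(\Gr B_c))^p$, so Assumption~\ref{H} holds with $n=1$. That $R(B_c)$ is a finitely generated projective module over $R(e\bold{Z}_0)$ is where $p\nmid|W|$ is used: by Chevalley--Shephard--Todd, $(\Sym\mf{h}^{*})^W$ and $(\Sym\mf{h})^W$ are polynomial algebras, hence $\Sym(\mf{h}\oplus\mf{h}^{*})$ is free over $(\Sym\mf{h}^{*})^W\otimes(\Sym\mf{h})^W$, which is in turn free over its subalgebra of $p$-th powers $\Gr(e\bold{Z}_0)=((\Sym\mf{h}^{*})^W)^p\otimes((\Sym\mf{h})^W)^p$; applying the Reynolds projection $e$ exhibits $\Gr B_c=(\Sym(\mf{h}\oplus\mf{h}^{*}))^W$ as an $\Gr(e\bold{Z}_0)$-module direct summand of $\Sym(\mf{h}\oplus\mf{h}^{*})$, hence as projective, and this passes to the Rees algebras, $R(e\bold{Z}_0)$ being again a polynomial algebra (cf.\ Proposition~\ref{Cohen}).

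For $G$ I would use the grading-element torus. For $t\in\bold{k}^{\times}$ the assignment $x\mapsto tx$ $(x\in\mf{h})$, $y\mapsto t^{-1}y$ $(y\in\mf{h}^{*})$, $g\mapsto g$ $(g\in W)$ extends to a $\bold{k}$-algebra automorphism $\phi_t$ of $H_c(W,\mf{h})$, the defining relations being preserved since $[t^{-1}y,tx]=[y,x]$, $(t^{-1}y,tx)=(y,x)$ and $(t^{-1}y,\alpha_s)(tx,\alpha_s')=(y,\alpha_s)(x,\alpha_s')$. Each $\phi_t$ is a filtered automorphism fixing $e$, hence restricts to $B_c$ and lifts to a $\bold{k}[\hbar]$-algebra automorphism of $R(B_c)$ preserving $R(e\bold{Z}_0)$; set $G=\{\phi_t\}\cong\mathbb{G}_m$. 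Via the Frobenius twist one identifies $\spec e\bold{Z}'=\mf{h}/W$, $\spec e\bold{Z}''=\mf{h}^{*}/W$, $\spec e\bold{Z}_0=(\mf{h}/W)\times(\mf{h}^{*}/W)$, and $\spec\bold{Z}(\Gr B_c)=(\mf{h}\oplus\mf{h}^{*})/W$, with $\spec\bold{Z}(\Gr B_c)\to\spec\Gr(e\bold{Z}_0)$ the finite map $[(h,\eta)]\mapsto([h],[\eta])$, and $\phi_t$ acting on the two factors of $\spec e\bold{Z}_0$ by mutually inverse scalings. By \cite{BGo}, Proposition 7.4, recalled above, the symplectic leaf through $[(h,\eta)]$ has dimension $\dim(\mf{h}\oplus\mf{h}^{*})^{W_h\cap W_\eta}=2\dim\mf{h}^{W_h\cap W_\eta}$, the last equality since $\mf{h}^{*}$ is dual to $\mf{h}$ and $p\nmid|W|$.

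Now let $M$ be a simple $B_c$-module affording $\chi$ on $e\bold{Z}'$, and let $\tilde\chi$ be the character of $e\bold{Z}_0$ that $M$ affords, corresponding to a point $(\bar\chi,\bar\psi)\in(\mf{h}/W)\times(\mf{h}^{*}/W)$ with $\bar\chi$ the given point. Then $\overline{G\tilde\chi}$ is the closure of the curve $\Gamma=\{(t\bar\chi,t^{-1}\bar\psi):t\in\bold{k}^{\times}\}$, and the key -- and least routine -- step is the claim
\[(\bar\chi,[0])\in V(\Gr I(\overline{G\tilde\chi})).\]
Here $V(\Gr I(-))$ is the reduced flat limit under the contracting $\mathbb{G}_m$-action on $\spec e\bold{Z}_0$ coming from the positive grading of $e\bold{Z}_0$, which scales both factors simultaneously. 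The curve $\Gamma$ resembles a hyperbola, meeting the coordinate directions only asymptotically, and its flat limit is a union of coordinate subspaces of $(\mf{h}/W)\times(\mf{h}^{*}/W)$, one of which is the closure of $\bold{k}^{\times}\bar\chi$ in the first factor times the origin of the second. Concretely, after embedding the two quotients in affine spaces via systems of homogeneous generators $\Gamma$ becomes a Laurent curve, and rescaling it to fix the $\mf{h}/W$-coordinate at $\bar\chi$ while letting the scaling parameter tend to $0$ produces exactly this component; equivalently, $(\bar\chi,[0])$ is a limit inside the closed total degeneration family over $\mathbb{A}^1$ of points lying in rescaled copies of $\Gamma$.

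Granting the claim, put $y:=(\bar\chi,[0])$. Its only preimage in $(\mf{h}\oplus\mf{h}^{*})/W$ under the finite map above is $[(\bar h,0)]$, where $\bar h\in\mf{h}$ is a lift of $\bar\chi$; since $\mathrm{Stab}_W(\bar h,0)=W_\chi$, the symplectic leaf through it has dimension $2\dim\mf{h}^{W_\chi}$. Corollary~\ref{baboon}, applied to $\tilde\chi$ and to the point $y$, then yields $i(\tilde\chi)\geq 2\dim\mf{h}^{W_\chi}$, i.e.\ $p^{\dim\mf{h}^{W_\chi}}$ divides $\dim M$; as $M$ was arbitrary, this proves the corollary. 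The principal obstacle is making the displayed degeneration claim precise; a secondary point is the careful use of $p\nmid|W|$ to secure projectivity of $R(B_c)$ over $R(e\bold{Z}_0)$.
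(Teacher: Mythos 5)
Your proposal is correct and follows essentially the same route as the paper: apply Corollary \ref{baboon} to $B_c$ with central subalgebra $e\bold{Z}'\otimes e\bold{Z}''$ and the hyperbolic $\mathbb{G}_m$-action $x\mapsto tx$, $y\mapsto t^{-1}y$, then read off the leaf dimension from the stabilizer description of the symplectic leaves of $(\mf{h}\oplus\mf{h}^{*})/W$. The degeneration claim you flag as the principal obstacle is settled in the paper by a two-line leading-term computation (showing $\Gr I(\mathbb{G}_m\mu)\cap e\bold{Z}'=\Gr I(\mathbb{G}_m\chi)$ by evaluating $f+g$ along the orbit and extracting the top Laurent coefficient), and in fact you only need \emph{some} point of $V(\Gr I(\mathbb{G}_m\mu))$ lying over $\bold{k}^{\times}\chi$ rather than the specific point $(\bar\chi,[0])$, since any preimage $v$ of such a point already satisfies $W_v\subset W_{\chi}$ and hence $d(v)\geq 2\dim\mf{h}^{W_{\chi}}$.
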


\begin{proof}

We need to prove that given a character $\mu:e\bold{Z}_0=e\bold{Z'}\otimes e\bold{Z"}\to \bold{k},$
such that $\mu|:e\bold{Z'}=\chi,$ then any $B_c$-module affording $\mu$ has
dimension divisible by $p^{{\mf{h}}^{W_{\chi}}}.$ Let us write $\mu=(\chi, \chi'),$
$\chi'\in \spec e\bold{Z"}.$

Since, $\Sym (\mf{h}\oplus \mf{h}^{*})^{W}=\Gr B_{ c}$ is a Cohen-Macaulay algebra, it
is a Cohen-Macaulay module over $\Gr e\bold{Z}_0,$ hence by Proposition
\ref{Cohen} $R(B_{1, c})$ is a Cohen-Macauly module over $R(e\bold{Z}_0).$ However,
$R(e\bold{Z}_0)$ is a polynomial algebra, therefore $R(B_{1,c})$ is a projective (actually
free by the Quillen-Suslin theorem) $R(e\bold{Z}_0)$-module.
$e\bold{Z}_0$ is a polynomial algebra, it follows that $B_c$ is projective (actually free)
over $e\bold{Z}_0.$
 We have an action of $\mathbb{G}_m$ on $H_c(W, \mf{h})$ which preserves $B_{c}$
 corresponding
to the grading with $\deg(x)=1, \deg(y)=-1, \deg(g)=0, g\in \Gamma=0, x\in \mf{h}, y\in \mf{h}^{*}.$
 This action preserves $e\bold{Z}_0.$ Therefore we may apply \ref{baboon}.
We need to understand $V(\Gr I(\mathbb{G}_m\mu))\subset \spec e\Gr Z_0$ and its preimage in $\spec \Gr B_c.$

We have $I(\mathbb{G}_m\mu)\cap e\bold{Z'}=I(\mathbb{G}_m\chi).$ Then 
$\Gr I(\mathbb{G}_m\mu)\cap e\bold{Z'}=\Gr I(\mf{G}_m\chi).$
Indeed, clearly $\Gr I(\mathbb{G}_m\chi)\subset \Gr I(\mathbb{G}_m\mu)\cap \Gr e\bold{Z'}.$ Suppose
that $f\in \Gr I(\mathbb{G}_m\mu)\cap e\bold{Z'}.$ Therefore, there is $g\in \Gr e\bold{Z"}$ such that
$\deg g<\deg f$ and $f+g\in I(\mathbb{G}_m\mu).$ Thus, $f(t\chi)+g(t^{-1}\chi',t\chi)=0, t\in \bold{k}^{*}.$
But the letter is a Laurent polynomial with leading term $t^{\deg f}f(\chi).$ Hence
$f(\chi)=0,$ so $f\in \Gr I(\mathbb{G}_m\chi).$
Therefore $p(V(\Gr I(\mathbb{G}_m\mu))=V (\Gr I(\mathbb{G}_m\chi))=\bold{k}\chi,$
where $p:\spec (e\bold{Z'}\otimes \bold{Z"})\to \spec e\bold{Z'}$ is
the projection. Thus we conclude that there is $\chi"\in \spec e\bold{Z"}$, 
such that
Let $(\chi", \chi)\in V(\Gr I(\mathbb{G}_m\mu).$ Then if $v\in (\mf{h}\times \mf{h^{*}})^{W}$  is a
 preimage of $(\chi", \chi)$
under the map $\spec \Gr B_c=(\mf{h}\times \mf{h^{*}})^{W}\to \spec (e\bold{Z'}\otimes e\bold{Z"}),$
 then the projection of $v$ its projection on $\mf{h}/W$
is $\chi.$

We conclude that $W_{v}$ is a subgroup of $W_{\chi},$ so 
$(\mf{h}\times\mf{h}^{*})^{W_{\chi}}\subset(\mf{h}\times\mf{h^{*}})^{W_{v}}.$
Therefore, Using the description of symplectic leaves of $(\mf{h}\oplus \mf{h}^{*})/W$ 
discussed above, we conclude  $d(v)\geq 2\dim \mf{h}^{W_{\chi}}.$
So, applying \ref{baboon}
$H_{c}(W, \mf{h})$-module which affords
character $\chi$ has dimension divisible by $p^{\frac{1}{2}}d(v),$ therefore
it is divisible by $p^{\dim \mf{h}^{W_{\chi}}}$.

\end{proof}

We have a similar result for Cherednik algebras
\begin{cor}\label{CH} Assume that $p$ does not divide $|W|.$ 
 Let $\chi :\bold{Z'}\to \bold{k}$ be a character.
Then any irreducible representation of $H_{c}(W, \mf{h})$ 
affording $\chi$ has dimension divisible by
$|W/W_{\chi}|p^{\dim \mf{h}^{W_{\chi}}},$ where $W_{\chi}$ is a subgroup fixing
an element of $W$-orbit corresponding to $\chi$ viewed as an element of $\mf{h}/W.$

\end{cor}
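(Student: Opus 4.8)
The plan is to deduce Corollary \ref{CH} from Corollary \ref{cherednik} together with the known Morita-type relationship between $H_c(W,\mf{h})$ and its spherical subalgebra $B_c$, using the finite-group structure to pick up the extra factor $|W/W_\chi|$. First I would recall that $H_c(W,\mf{h})$ is, up to the central character in question, controlled by $B_c = eH_c(W,\mf{h})e$: more precisely, over the locus we care about, the functor $M\mapsto eM$ relates simple $H_c$-modules affording $\chi$ to simple $B_c$-modules affording the restriction of $\chi$ to $e\bold{Z}'$. The point is that if $N$ is a simple $B_c$-module, then an associated simple $H_c$-module $M$ with $eM\cong N$ satisfies $\dim M = \sum_{\tau}\dim(\text{Hom}_W(\tau, M))\dim\tau$, and the interplay of $e$ with the group algebra forces a divisibility of $\dim M$ by something of the form $|W/W_\chi|$ times $\dim N$ — this is where the stabilizer enters.

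The key steps in order: (1) Fix a character $\mu=(\chi,\chi')$ of $\bold{Z}_0 = \bold{Z}'\otimes\bold{Z}''$ lying over $\chi$, and let $M$ be a simple $H_c(W,\mf{h})$-module affording $\mu$. (2) Apply Corollary \ref{cherednik} to $eM$ (when nonzero) to get that $\dim(eM)$ is divisible by $p^{\dim\mf{h}^{W_\chi}}$; handle the case $eM=0$ by a standard argument replacing $e$ with the idempotent attached to another irreducible $W$-character, whose image is again Morita-equivalent to $B_c$ for a different parameter, or by invoking the double centralizer property that guarantees some $e_\tau M\ne 0$ with the analogous dimension bound. (3) Decompose $M$ as a $W$-module, $M=\bigoplus_\tau \tau\otimes \text{Hom}_W(\tau,M)$, and observe that each multiplicity space $\text{Hom}_W(\tau,M)$ is a module over $e_\tau H_c e_\tau$; relate its dimension to $\dim(eM)$ and hence get divisibility by $p^{\dim\mf{h}^{W_\chi}}$ of every multiplicity space (using that the $e_\tau H_c e_\tau$ are all Morita equivalent to $B_c$ near $\chi$, by a version of [\cite{EG}] / the standard theory, since $p\nmid|W|$). (4) Now argue that the $W$-set structure forces $|W/W_\chi|$ to divide $\dim M$: the support condition says a preimage $v\in\mf{h}$ of $\chi$ has $W$-orbit of size $|W/W_\chi|$, and the geometry of $\spec\bold{Z}_0$ together with the $W$-equivariance of the $H_c$-action organizes $M$ into $|W/W_\chi|$ "blocks" permuted by $W$, or equivalently $\dim M = |W/W_\chi|\cdot(\text{something divisible by }p^{\dim\mf{h}^{W_\chi}})$. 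Combining (3) and (4) gives the claim.

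The main obstacle I expect is step (4) — cleanly extracting the factor $|W/W_\chi|$. Corollary \ref{cherednik} as stated only sees $\chi$ through $e\bold{Z}'=((\Sym\mf{h}^*)^W)^p$, i.e.\ through the image point in $\mf{h}/W$, and it gives a bound for $B_c$, which has already "quotiented out" by $W$; so recovering how many $W$-translates of a genuine point in $\mf{h}$ lie over $\chi$ requires looking at $H_c$ directly rather than $B_c$. The natural fix is to run the argument of Corollary \ref{cherednik} with $H_c(W,\mf{h})$ in place of $B_c$ and $\bold{Z}_0$ in place of $e\bold{Z}_0$: apply Corollary \ref{baboon} to $A=H_c(W,\mf{h})$, $\bold{Z}_0 = \bold{Z}'\otimes\bold{Z}''$, with the same $\mathbb{G}_m$-action, and compute $V(\Gr I(\mathbb{G}_m\mu))$ and its preimage $v\in \spec\Gr H_c = (W\backslash\!\backslash\text{?})$ — here $\spec\bold{Z}(\Gr H_c)=(\mf{h}\oplus\mf{h}^*)/W$ again, but the subtlety is that $H_c$ is not commutative at the associated-graded level, so one must instead use that $\Gr H_c = \bold{k}W\ltimes\Sym(\mf{h}\oplus\mf{h}^*)$ has center $(\Sym(\mf{h}\oplus\mf{h}^*))^W$ and that the relevant Poisson module $D'(\Gr H_c)/J$ is supported on the $W$-orbit of $v$; the dimension of a simple module over the local algebra at such an orbit is $|W/W_v|$ times the dimension over the local algebra at a single point (because $\bold{k}W\ltimes$(local ring) has its simple modules of dimension $|W/\text{stab}|\cdot(\cdots)$, since $p\nmid|W|$ makes $\bold{k}W$ semisimple). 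Tracking this through Corollary \ref{baboon}'s infimum, and noting $W_v\subseteq W_\chi$ with $\dim\mf{h}^{W_v}\ge\dim\mf{h}^{W_\chi}$ for a preimage in the closure, yields $i(\chi)\ge |W/W_\chi|\,p^{\dim\mf{h}^{W_\chi}}$ (absorbing the group factor into $d(y)$-counting via the $\bold{k}W$-action). The remaining routine verification is that $R(H_c)$ is finitely generated projective over $R(\bold{Z}_0)$ — which follows exactly as in the proof of Corollary \ref{cherednik} from $\Gr H_c$ being Cohen-Macaulay over the polynomial algebra $\Gr\bold{Z}_0$ together with Proposition \ref{Cohen} and Quillen--Suslin.
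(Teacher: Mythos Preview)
Your proposal eventually lands on the right two ingredients, but the route is circuitous. The paper's proof is considerably more direct: it never passes through $B_c$ or $eM$ at all. Instead it applies Corollary \ref{baboon} straight to $A=H_c(W,\mf{h})$ with $\bold{Z}_0=\bold{Z}'\otimes\bold{Z}''$ (exactly the ``natural fix'' you mention at the end), after checking as you do that $R(H_c)$ is projective over $R(\bold{Z}_0)$ via Cohen--Macaulayness and Proposition \ref{Cohen}. This already yields $p^{\dim\mf{h}^{W_\chi}}\mid\dim M$ for any simple $M$ affording $\chi$, by the same $\mathbb{G}_m$-degeneration argument as in Corollary \ref{cherednik}.

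The factor $|W/W_\chi|$ is then obtained \emph{separately and elementarily}, not by enhancing the Poisson-module bound. The paper simply views $M$ as a module over $(\Sym\mf{h}^*)^p\subset H_c$ and decomposes it by support over the individual points $\chi'$ of the $W$-orbit in $\mf{h}$ lying over $\chi$: $M=\bigoplus_{\chi'\in p^{-1}(\chi)} M_{\chi'}$. Since $g\cdot M_{\chi'}=M_{g\chi'}$, these summands are permuted transitively by $W$ and hence have equal dimension; as there are $|W/W_\chi|$ of them, $|W/W_\chi|\mid\dim M$. This is precisely your ``blocks permuted by $W$'' observation from step (4), which is not an obstacle but a one-line argument. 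Finally, since $p\nmid|W|$, the two divisors $p^{\dim\mf{h}^{W_\chi}}$ and $|W/W_\chi|$ are coprime, so their product divides $\dim M$. Your attempt to absorb $|W/W_\chi|$ into the $d(y)$-count at the associated-graded level would require strengthening Theorem \ref{key}/Proposition \ref{cheeky} to account for the $\bold{k}W$-factor, which is possible but unnecessary work.
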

\begin{proof}
Sinnce $\Gr H_{c}(W, \mf{h})$ is a free $\Sym (\mf{h}\oplus\mf{h^*})$-module, $\Gr H_{c}(W, \mf{h})$ is
a Cohen-Macauly $\Gr (\bold{Z}_0)$-module, hence $R(H_{c}(W, \mf{h})$ is a projective
$R(\bold{Z}_0)$-module.
Just like in the proof of \ref{cherednik}, using \ref{baboon} we obtain that
 any simple ${H_{c}(W, \mf{h}}_{\chi}$-module $M$ has dimension divisible by $p^{\dim \mf{h}^{\Gamma_v}}.$
But as $\bold{k}[\mf{h}]$-module, $M$ can be written as $M=\oplus_{\chi'\in p^{-1}(\chi)} M_{\chi'},$ where 
$p:\mf{h}\to \mf{h}/W$ is the projection. 
Clearly, if $m\in M_{\chi'},$ then $gm\in M_{g\chi'}.$ So action by elements of $W$ is permuting $M_{\chi'}.$
Hence $\dim M_{\chi'}=\dim M_{g\chi'}, g\in W.$
Since $|p^{-1}(\chi)|=|W/W_{\chi}|,$  
 we conclude that
$|W/W_{\chi}|$ divides $\dim_{\bold{k}} M.$ So, $|W/W_{\chi}|p^{\mf{h}^{W_{\chi}}}$ divides $\dim M.$
\end{proof}

As pointed out to us by I. Gordon, one can also prove Corollary \ref{CH} as follows. 

As before, let $\chi$ be a character of $\bold{Z'}=((\Sym \mf{h}^{*})^W)^p.$ 

We
will denote by $\bold{Z}_c$ the center $H_c(W, \mf{h}).$
Denote by $\overline{H_c(W, \mf{h})_{\chi}}$ the
completion of $H_c(W, \mf{h})$ with respect to $Ker(\chi)\subset ((\Sym \mf{h}^{*})^W)^p.$
Denote by $\overline{\bold{Z}_{c, \chi}}$ the center of $\overline{H_c(W, \mf{h})_{\chi}}.$ 
Then $\overline{\bold{Z}_{c, \chi}}$
is the completion of $\bold{Z}_c$ with respect to the ideal $Ker(\chi)\subset\bold{Z}_c.$ 
We have the similar notations
for $\chi\in \spec((\Sym \mf{h})^W)^p.$ 
By $0\in \spec \bold{Z'}(\spec \bold{Z"})$ we will denote the origin.
The following result is the characteristic $p$ version of
a result by Bezrukavnikov-Etingof \cite{BE}, whose prove is identical to the original one.

\begin{theorem}\label{BE}(\cite{BE} Theorem 3.2) For any $\chi\in \spec((\Sym \mf{h}^{*})^W)^p$ (respectively $\spec((\Sym \mf{h})^W)^p.$
there is an isomorphism of algebras 
$\overline{H_c(W, \mf{h})_{\chi}}\simeq Mat_{|W|/|W_{\chi}|}(\overline{H_{c'}(W_{\chi}, \mf{h})_0}),$
where $W_{\chi}$ is the stabilizer of a lift of $\chi$ in $\mf{h}$(respectively $\mf{h^{*}}$) and $c'$ is the
restriction of $c$ on $W_{\chi}.$

\end{theorem}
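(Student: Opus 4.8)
Theorem \ref{BE} asserts that the completion $\overline{H_c(W,\mf{h})_\chi}$ is a matrix algebra of size $|W|/|W_\chi|$ over the completion $\overline{H_{c'}(W_\chi,\mf{h})_0}$, where $c'=c|_{W_\chi}$ and $\chi\in\spec((\Sym\mf{h}^*)^W)^p$ (or the analogous statement over $((\Sym\mf{h})^W)^p$). Let me sketch how I would carry out the characteristic-$p$ adaptation of Bezrukavnikov--Etingof.

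The plan is to reduce everything to a statement about completions along the $W$-orbit inside $\mf{h}$ and then produce the Morita/matrix equivalence by an explicit idempotent-and-induction argument. First I would fix a lift of $\chi$ to a point $b\in\mf{h}$ (really a $W$-orbit $Wb$ of size $|W|/|W_\chi|$, since $p\nmid|W|$ guarantees the orbit is reduced and the stabilizers are the honest stabilizers) and pass from the completion along $\mathrm{Ker}(\chi)\subset((\Sym\mf{h}^*)^W)^p$ to the completion of $H_c(W,\mf{h})$ along the ideal of the orbit $Wb\subset\mf{h}=\spec\Sym\mf{h}^*$. These agree because $(\Sym\mf{h}^*)^W$ is finite over $((\Sym\mf{h}^*)^W)^p$ and $\Sym\mf{h}^*$ is finite over $(\Sym\mf{h}^*)^W$, so the completions along the corresponding maximal ideals / ideals of orbits coincide up to the same finite extensions; here the hypothesis $p\nmid|W|$ is again what keeps $(\Sym\mf{h}^*)^W\hookrightarrow\Sym\mf{h}^*$ étale over the relevant locus. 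Call this completed algebra $\widehat{H}_{Wb}$.

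Next I would build the matrix structure. Decompose the completed polynomial ring $\widehat{(\Sym\mf{h}^*)}_{Wb}=\prod_{w\in W/W_\chi}\widehat{(\Sym\mf{h}^*)}_{wb}$ into its factors indexed by the points of the orbit; let $e_{wb}$ be the corresponding orthogonal idempotents (these lie in the completion, and their sum is $1$). The group $W$ permutes these idempotents transitively, with $W_\chi$ the stabilizer of $e_b$. Now take $\varepsilon=e_b$ and consider $\varepsilon\,\widehat{H}_{Wb}\,\varepsilon$: because $H_c=\bold{k}W\ltimes$ (stuff) and each $\widehat{H}_{wb}$ as a space is $\bigl(\bigoplus_{w'}w'\widehat{(\Sym\mf{h}^*)}_{wb}\bigr)\otimes\Sym\mf{h}$ with commutation relations that are \emph{local} at the orbit, one checks $\varepsilon\widehat{H}_{Wb}\varepsilon$ is generated over $\widehat{(\Sym\mf{h}^*)}_b$ by $\mf{h}$ and by $W_\chi$, with exactly the rational Cherednik relations for $(W_\chi,\mf{h},c')$ — this is where the Dunkl-operator computation of Bezrukavnikov--Etingof gets transported: the key point is that for $s\in W\setminus W_\chi$ the reflection $s$ moves $b$, hence $e_b s e_b=0$, so only reflections in $W_\chi$ survive in $\varepsilon H\varepsilon$, and the coefficient functions $(x,\alpha_s')$ become units or are killed appropriately after completion. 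Thus $\varepsilon\widehat{H}_{Wb}\varepsilon\cong\overline{H_{c'}(W_\chi,\mf{h})_0}$. Finally, since $W$ acts transitively on the idempotents $\{e_{wb}\}$ and $\widehat{H}_{Wb}\varepsilon$ is a faithful right $\varepsilon\widehat{H}_{Wb}\varepsilon$-module which generates (one shows $\widehat{H}_{Wb}\varepsilon\widehat{H}_{Wb}=\widehat{H}_{Wb}$ using that $\sum_{w}w e_b w^{-1}=\sum_w e_{wb}=1$), the standard Morita argument gives $\widehat{H}_{Wb}\cong\End_{\varepsilon\widehat{H}_{Wb}\varepsilon}(\widehat{H}_{Wb}\varepsilon)\cong\mathrm{Mat}_{|W/W_\chi|}(\varepsilon\widehat{H}_{Wb}\varepsilon)$, which is the assertion. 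The case $\chi\in\spec((\Sym\mf{h})^W)^p$ is symmetric, using the polynomial subalgebra $\Sym\mf{h}$ in place of $\Sym\mf{h}^*$.

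The main obstacle is verifying that $\varepsilon\widehat{H}_{Wb}\varepsilon$ is \emph{exactly} the completed rational Cherednik algebra $\overline{H_{c'}(W_\chi,\mf{h})_0}$ and not merely a filtered deformation with the right associated graded — i.e. checking that no extra terms appear in the bracket $[y,x]$ after compressing by $e_b$ and completing. In characteristic $0$ this is the Dunkl-embedding argument; here one must instead argue directly with the PBW relations, splitting $\sum_{s\in S}c_s(y,\alpha_s)(x,\alpha_s')s$ as $\sum_{s\in S\cap W_\chi}+\sum_{s\in S\setminus W_\chi}$ and using $e_b s e_b = s e_b$ for $s\in W_\chi$ versus $e_b s e_b=0$ for $s\notin W_\chi$ (the latter because such $s$ does not fix $b$, combined with $p\nmid|W|$ so that the orbit idempotents are genuinely orthogonal). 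Once that local computation is pinned down, the surrounding Morita formalism is routine and, as the author notes, identical to \cite{BE}.
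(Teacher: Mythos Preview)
Your proposal is correct and follows exactly the Bezrukavnikov--Etingof argument, which is precisely what the paper does: it states the result and remarks that the proof is identical to the characteristic-zero original in \cite{BE}, without reproducing any details. Your sketch is thus a faithful expansion of the paper's one-line citation.
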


In particular, since
 $H_{c'}(W_{\chi}, \mf{h})\simeq Mat_{|W/W_{\chi}|}(D(\mf{h}^{W_{\chi}})\otimes H_{c'}(W_{\chi}, ({\mf{h}}^{W_{\chi}})^{\perp})$
where $({\mf{h}}^{W_{\chi}})^{\perp}=(({\mf{h}}^{*})^{W_{\chi}})^{*}$ and 
and dimension of any finite dimensional $Mat_{|W/W_{\chi}|}(D(\mf{h}^{W_{\chi}})$-module is a multiple
of $|W/W_{\chi}|p^{\dim \mf{h}^{W_{\chi}}},$ we get that \ref{BE} implies \ref{CH}.

We have the following

\begin{cor} For any character $\mu\in \spec \bold{Z}_c,$ there is a subgroup $W'\subset W$ and a character
$\mu'\in \spec \bold{Z}_{c'},$ $\mu'|(\bold{Z'}\otimes \bold{Z"})_{+}=0$ where $c'$ is 
the restriction of $c$ on $W',$ such that 
$\overline{H_c(W, \mf{h})}_{\mu}\simeq Mat_{|W|/|W'|}(\overline{H_{c'}(W', \mf{h})}_{\mu'}.$

\end{cor}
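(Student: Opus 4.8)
The plan is to reduce the general central character $\mu\in\spec\bold{Z}_c$ to the two ``partial'' cases already handled by Theorem \ref{BE}, by composing two instances of that theorem. Write $\bold{Z}_0=\bold{Z'}\otimes\bold{Z"}$ where $\bold{Z'}=((\Sym\mf{h}^{*})^W)^p$ and $\bold{Z"}=((\Sym\mf{h})^W)^p$. The character $\mu$ restricts to characters $\chi_1\in\spec\bold{Z'}$ and $\chi_2\in\spec\bold{Z"}$; view $\chi_1$ as a $W$-orbit in $\mf{h}$ and pick a lift with stabilizer $W_{\chi_1}=:W_1$. First I would apply Theorem \ref{BE} (the $\mf{h}$-version) at $\chi_1$ to get
$$\overline{H_c(W,\mf{h})_{\chi_1}}\simeq Mat_{|W|/|W_1|}\bigl(\overline{H_{c_1}(W_1,\mf{h})_0}\bigr),$$
where $c_1=c|_{W_1}$ and the subscript $0$ means completion at the origin of $\bold{Z'}$ only. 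Passing to a further completion, $\overline{H_c(W,\mf{h})}_{\mu}$ is a matrix algebra of the same size over the completion of $H_{c_1}(W_1,\mf{h})$ at the point of $\spec\bold{Z}_{c_1}$ determined by $\mu$ (origin in the $\bold{Z'}$-direction, $\chi_2$ in the $\bold{Z"}$-direction). Matrix algebras commute with completion, so this step is formal once one checks that the central subalgebra $\bold{Z'}$ of $H_c(W,\mf{h})$ maps into the corresponding $\bold{Z'}$ of $H_{c_1}(W_1,\mf{h})$ compatibly — which is exactly the content of the isomorphism in Theorem \ref{BE}.

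Next I would apply Theorem \ref{BE} again, this time the $\mf{h}^{*}$-version, to the algebra $H_{c_1}(W_1,\mf{h})$ at the character $\chi_2\in\spec((\Sym\mf{h})^{W_1})^p$ (note $\chi_2$ was $W$-invariant, hence a fortiori defines a point of the $W_1$-quotient). Let $W'\subset W_1$ be the stabilizer of a lift of $\chi_2$ in $\mf{h}^{*}$, and $c'=c_1|_{W'}=c|_{W'}$. This gives
$$\overline{H_{c_1}(W_1,\mf{h})_{\chi_2}}\simeq Mat_{|W_1|/|W'|}\bigl(\overline{H_{c'}(W',\mf{h})_0}\bigr),$$
where now the subscript $0$ is the origin of $((\Sym\mf{h})^{W'})^p$. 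Completing once more in the remaining ($\bold{Z'}$-)direction and using that the composite is still completion at the point of $\spec\bold{Z}_{c'}$ whose image in $\bold{Z'}\otimes\bold{Z"}$ is the origin, call this character $\mu'$; so $\mu'|(\bold{Z'}\otimes\bold{Z"})_{+}=0$ as required. Composing the two matrix decompositions and using $Mat_a(Mat_b(R))=Mat_{ab}(R)$ together with $|W|/|W_1|\cdot|W_1|/|W'|=|W|/|W'|$ yields
$$\overline{H_c(W,\mf{h})}_{\mu}\simeq Mat_{|W|/|W'|}\bigl(\overline{H_{c'}(W',\mf{h})}_{\mu'}\bigr).$$

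The main obstacle is bookkeeping rather than a genuinely hard step: one must make sure that ``completing at $\chi_1$, then at $\chi_2$'' on $H_c(W,\mf{h})$ agrees with ``completing at $\mu$'', i.e.\ that the two commuting central subalgebras $\bold{Z'}$ and $\bold{Z"}$ let us complete in either order and that Theorem \ref{BE}, proved for completion along one factor, is compatible with the untouched factor being carried along — this is where one uses that $\bold{Z_0}=\bold{Z'}\otimes\bold{Z"}$ is a genuine tensor decomposition and that the isomorphism of Theorem \ref{BE} is an isomorphism of algebras over the relevant central subalgebra. One should also verify that passing to the subgroup $W_1$ and then $W'$ does not lose track of which point of $\spec\bold{Z}_{c'}$ one lands at; since $W'\subset W_1\subset W$ and the restricted functions $c_1,c'$ are just restrictions of $c$, this is consistent. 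Granting these compatibilities, the corollary follows purely formally from two applications of Theorem \ref{BE}.
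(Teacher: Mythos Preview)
Your proposal is correct and follows exactly the same strategy as the paper: apply Theorem \ref{BE} once along $\bold{Z'}$ to reduce to a character whose $\bold{Z'}$-component is the origin, then apply it again along $\bold{Z"}$ to kill the $\bold{Z"}$-component as well, composing the matrix sizes via $Mat_a(Mat_b(R))\simeq Mat_{ab}(R)$.

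One small imprecision worth flagging: your remark that ``$\chi_2$ was $W$-invariant, hence a fortiori defines a point of the $W_1$-quotient'' is not quite right as stated, since $((\Sym\mf{h})^W)^p\subset ((\Sym\mf{h})^{W_1})^p$ and a character of the smaller ring does not canonically extend to the larger one (equivalently, a $W$-orbit in $\mf{h}^{*}$ may split into several $W_1$-orbits). The paper sidesteps this by defining the intermediate character $\mu_1$ purely through the isomorphism of centers $\overline{\bold{Z}_{c,\chi}}\simeq\overline{\bold{Z}_{c',0}}$ induced by Theorem \ref{BE}, rather than attempting to identify its $\bold{Z"}$-component explicitly with $\chi_2$. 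You yourself acknowledge this bookkeeping issue in your final paragraph, so this is a matter of tightening the exposition rather than a gap in the argument.
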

\begin{proof}
Let $\chi$ be the restriction of $\mu$ on $\bold{Z'}.$ By \ref{BE}, we have
$\overline{H_c(W, \mf{h})_{\chi}}\simeq Mat_{|W|/|W_{\chi}}(\overline{H_{c'}(W_{\chi}, \mf{h})_0}).$
Let $\mu_1$ be the character of $\overline{\bold{Z}_{c',0}}$ corresponding to $\mu$ under
the isomorphism $\overline{\bold{Z}_{c, \chi}}\simeq \overline{\bold{Z}_{c', 0}}.$ We will also denote
by $\mu_1$ the restriction of $\mu_1$ on $\bold{Z}_{c'},$ in particular 
$\mu_1{{(\Sym \mf{h^{*}}^{W_{\chi}})}^p}_{+}=0.$
Thus, $\overline{H_c(W, \mf{h})_{\mu}}\simeq Mat_{|W|/|W_{\chi}|}(\overline{H_{c'}(W_{\chi},\mf{h})})_{0}.$ 
Therefore, by replacing $\mu$ with $\mu_1$ we may assume that $\chi=0.$ Again using \ref{BE}
we have $\overline{H_c(W, \mf{h})_{\chi'}}\simeq Mat_{|W|/|W_{\chi'}|}(\overline{H_{c'}(W_{\chi'}, \mf{h})_0})$
Let $\mu'$ be the corresponding character of $\bold{Z}_{c'}.$
 Then $\mu'|(\bold{Z'}\otimes\bold{Z"})_{+}=0$ and we are done.

\end{proof}
The above corollary implies that for studying irreducible representation of Cherednik
algebras it is enough to consider restricted representations, i.e modules
annihilated by ${({\Sym\mf{h}}^W\otimes {\Sym\mf{h^{*}}}^W)^p}_{+}.$

\end{document}